\definecolor{red}{rgb}{1,0,0}
\newcommand{\xto}[1]{\xrightarrow{\phantom{a}{#1}{\phantom{a}}}}
\newcommand{\vvirg}{ , \dots , }
\newcommand{\ootimes}{ \otimes \cdots \otimes }
\newcommand{\ooplus}{ \oplus \cdots \oplus }
\newcommand{\ttimes}{ \times \cdots \times }
\newcommand{\textsum}{{\textstyle \sum}}
\newcommand{\textprod}{{\textstyle \prod}}
\newcommand{\bfA}{\mathbf{A}}
\newcommand{\bfF}{\mathbf{F}}
\newcommand{\bfP}{\mathbf{P}}
\newcommand{\bfQ}{\mathbf{Q}}
\newcommand{\bfR}{\mathbf{R}}
\newcommand{\bfV}{\mathbf{V}}
\newcommand{\bfa}{\mathbf{a}}
\newcommand{\bfd}{\mathbf{d}}
\newcommand{\bfk}{\mathbf{k}}
\newcommand{\calC}{\mathcal{C}}
\newcommand{\calF}{\mathcal{F}}
\newcommand{\calI}{\mathcal{I}}
\newcommand{\calQ}{\mathcal{Q}}
\newcommand{\calS}{\mathcal{S}}
\newcommand{\calW}{\mathcal{W}}
\newcommand{\calZ}{\mathcal{Z}}
\newcommand{\scrO}{\mathscr{O}}
\newcommand{\bbC}{\mathbb{C}}
\newcommand{\bbN}{\mathbb{N}}
\newcommand{\bbP}{\mathbb{P}}
\newcommand{\bbZ}{\mathbb{Z}}
\newcommand{\frakM}{\mathfrak{M}}
\newcommand{\frakS}{\mathfrak{S}}
\newcommand{\frakd}{\mathfrak{d}}
\newcommand{\frakg}{\mathfrak{g}}
\newcommand{\frakh}{\mathfrak{h}}
\newcommand{\frakl}{\mathfrak{l}}
\newcommand{\frako}{\mathfrak{o}}
\newcommand{\bftau}{\bfmath{\tau}}
\renewcommand{\phi}{\varphi}
\newcommand{\eps}{\varepsilon}
\renewcommand{\theta}{\vartheta}
\renewcommand{\tilde}[1]{\widetilde{#1}}
\renewcommand{\bar}[1]{\overline{#1}}
\newcommand{\id}{\mathrm{id}}
\newcommand{\Id}{\mathrm{Id}}
\renewcommand{\Im}{\mathrm{Im} \;}
\DeclareMathOperator{\Hom}{Hom}
\DeclareMathOperator{\trace}{trace}
\DeclareMathOperator{\rk}{rk}
\DeclareMathOperator{\End}{End}
\DeclareMathOperator{\Aut}{Aut}
\DeclareMathOperator{\Inn}{Inn}
\DeclareMathOperator{\Out}{Out}
\newcommand{\frakMod}{\frakM\frako\frakd}
\newcommand{\bfmath}[1]{\mbox{\boldmath $#1$}}
\newcommand{\fillwidthof}[3][c]% \fillwidthof[pos]{textToReplace}{textToPrint}
	{%
		\parbox
		{%
			\widthof{#2}%
		}%
		{%
			\ifx#1c%
				\centering#3%
			\else\ifx#1l%
				#3\hfill%
			\else\ifx#1r%
				\hfill#3%
			\fi\fi\fi%
		}%
	}%
\def\mylettrine#1#2 {\lettrine{#1}{#2}\space}
\newtheorem{thm}{Theorem}[section]
\newtheorem{lemma}[thm]{Lemma}
\newtheorem{prop}[thm]{Proposition}
\theoremstyle{definition}
\theoremstyle{remark}
\newtheorem{obser}[thm]{Observation}
\title{Geometric aspects of Iterated Matrix Multiplication}
\author{Fulvio Gesmundo}
\address{Dept. of Mathematics, Texas A{\&}M University, College Station, TX 77843-3368, U.S.A.}
\email{fulges@math.tamu.edu}
\keywords{Iterated Matrix Multiplication, symmetry group of a polynomial, dual degeneracy, Jacobian loci of hypersurfaces, degeneration of quiver orbits}
\subjclass[2010]{68Q17; 15A86, 14L40, 16G20}
\begin{document}

\begin{abstract}
This paper studies geometric properties of the Iterated Matrix Multiplication polynomial and the hypersurface that it defines. We focus on geometric aspects that may be relevant for complexity theory such as the symmetry group of the polynomial, the dual variety and the Jacobian loci of the hypersurface, that are computed with the aid of representation theory of quivers.
\end{abstract}

\maketitle

\section{Introduction} 

Let $q$ be a positive integer and let $Mat_q$ denote the vector space of $q \times q$ matrices with complex coefficients. For a positive integer $n$, we denote by $IMM_q^n$ the Iterated Matrix Multiplication polynomial, that is the polynomial on the vector space $V:=Mat_q^{\oplus n}$ of $n$-tuples of $q \times q$ matrices whose value on the $n$-tuple of matrices $(X_1 \vvirg X_n)$ is $\trace (X_n \cdots X_1)$. Thus, $IMM_q^n$ is a homogeneous polynomial of degree $n$ in $nq^2$ variables.

The main motivation for this study is the completeness of particular instances of $IMM_q^n$ for some complexity classes. For $q=3$, the sequence of polynomials $IMM_3^n$ is $\bfV\bfP_e$-complete \cite{BeCl:Alg_for_const_reg}; $\bfV\bfP_e$ is the complexity class of sequences of polynomials that admit a \emph{small formula} (see e.g. \cite[Ch. 13]{Lan:TensorBook} for details). For $q=n$, the sequence of polynomials $IMM_n^n$ is $\bfV\bfQ\bfP$-complete \cite{Bla:Compl_prob_for_qp_polynomials}; $\bfV\bfQ\bfP$ is the same complexity class for which the determinant polynomial $\det_n$ is complete; moreover, $\bfV\bfQ\bfP$ is equivalent to polynomially sized algebraic branching programs (see e.g. \cite{FoLiMaSr:bounds_depth4} and \cite{DvMaPeYe:Multi_Branching_Programs}).

We use tools from algebraic geometry and representation theory in order to study geometric properties of the polynomial $IMM^n_q$. We determine the symmetry group of the polynomial $IMM^n_q$ and we prove that this polynomial is characterized by its symmetry group. We make a study of geometric properties of the algebraic hypersurface $\calI mm^n_q\subseteq Mat_q^{\oplus n} \simeq \bbC^{nq^2}$ cut out by the polynomial $IMM^n_q$: we determine the dimension of the dual variety of $\calI mm^n_q$ and give a description of the singular locus of $\calI mm^n_q$ and of its $(n-2)$-nd Jacobian locus, namely the zero-locus of the partial derivatives of $IMM_q^n$ of order $(n-2)$.

Before we describe our goals in detail, we briefly present a possible general strategy toward the separation of complexity classes (see e.g. \cite{Lan:GCT_intro_for_geo} and \cite{Lan:Intro_to_GCT} for details). The main idea is to exploit \emph{pathologies} affecting a sequence of polynomials that is complete for a fixed complexity class, in order to prove that some given sequence of polynomials, not sharing such pathology, does not belong to the complexity class. More precisely, if $g_n \in S^{d_n}\bbC^{N_n}$ is a sequence of polynomials that is complete for a complexity class $\calC$, then a sequence of polynomials $f_m \in S^{e_m} \bbC^{M_m}$ is in $\calC$ if and only if it can be polynomially reduced to $g_n$, namely if and only if there is a polynomial function $n(m)$ such that, for every $m$
\[
 z^{d_{n(m)}-e_{m}} f_m \in \End(\bbC^{N_{n(m)}}) \cdot g_{n(m)} \subseteq S^{d_{n(m)}} (\bbC^{N_{n(m)}}),
\]
where $z$ is a \emph{padding variable} and $\bbC^{M_m} \oplus \bbC z$ is viewed as a subspace of $\bbC^{N_{n(m)}}$. When we say \emph{pathology}, we mean a geometric property that is shared by all polynomials in the set $\End(\bbC^{N_n}) \cdot g_n$ (and possibly other polynomials) but it is not shared by the padded polynomials $z^{d_n -e_m} f_m$, whenever $n$ grows at most polynomially in $m$; determining such property would prove that the sequence $\{ f_m \}$ does not belong to the complexity class $\calC$.

The Geometric Complexity Theory (GCT) program (see \cite{GCT2}) focuses on the study of polynomials that are \emph{characterized} by their \emph{symmetry group}, that is the stabilizer under the action of the general linear group of the space generated by the variables. If $f \in S^d W$ and $G_f \subseteq GL(W)$ is its symmetry group, then we say that $f$ is characterized by $G_f$ if it is the only polynomial, up to scale, whose stabilizer contains $G_f$. The algebraic Peter-Weyl Theorem (see e.g. \cite[Ch. 6, Sec. 2.6]{Pro:LieGroups}) leads to a description of the ring of regular functions on the group orbit $GL(W)\cdot f \subseteq S^d W$ in terms of $G_f$-invariants; if $f$ is characterized by its stabilizer, then the coordinate ring of the orbit of $f$ is unique as $GL(W)$-module among the coordinate rings of $GL(W)$-orbits in $S^d W$. In particular, sequences of polynomials that are complete for some complexity class and that are characterized by their stabilizer can be considered \emph{best} representatives of their class. This approach was used in the past to provide representation theoretic obstructions proving lower bounds on the complexity of certain polynomials (see \cite{BLMW} and \cite{BuIk:Expl_Bounds_Via_GCT}) and will be part of future work in the setting of $IMM^n_q$.

In Section 2, we determine the symmetry group of the polynomial $IMM_q^n$ (Thm. \ref{thm: stabilizer of IMM}). Furthermore, we prove that $IMM_q^n$ is characterized by its stabilizer (Prop. \ref{prop: IMM characterized by its stabilizer}). These results motivate the study of geometric properties of $IMM_q^n$, that we develop in the rest of the paper.

In Section 3 we prove that the dual variety of the hypersurface $\calI mm_q^n$ is itself a hypersurface (Thm. \ref{thm: dual of calImm is hypersurface}); in \cite{LaMaRa:Deg_duals}, dual degeneracy was shown to be a property that can lead to lower bounds on the complexity of polynomials, and our result shows that unfortunately it cannot be exploited in our setting. 

In Section 4, we study the singular locus of $\calI mm^n_q$, namely the zero locus of its first order partial derivatives. The Representation Theory of Quivers provides powerful tools to describe the irreducible components of this singular locus. We characterize its irreducible components in terms of particular nilpotent representations for the Euclidean equioriented quiver $\tilde{\bfA}_{n}$ (Thm. \ref{thm: sing compt iff rk-maximal}) and we give a formula for their dimensions (Thm. \ref{thm: desingularization of irred compts of Sing}). We close the section with some explicit examples of the decomposition of the singular locus into its irreducible components, for small values of $q$ and $n$.

In Section 5, we determine the dimension and the number of irreducible components of the $(n-2)$-nd Jacobian locus of $\calI mm^n_q$, namely the zero locus of the partial derivatives of order $(n-2)$ (Thm. \ref{thm: n-2 singular locus}).

Since the paper deals with several aspects of algebraic geometry and representation theory, we occasionally recall the main definitions and some basic results concerning the topics that we discuss, in order to make the paper accessible to a broader audience.

\subsection*{Notation} We set the notation that will be used throughout the paper. 

Let $U_1 \vvirg U_n$ be complex vector spaces of dimension $q$. It will be useful to read the index $\alpha$ of a space $U_\alpha$ modulo $n$; in particular, $U_{n+1} = U_1$. For $\alpha = 1 \vvirg n$, let $A_\alpha := U_\alpha^* \otimes U_{\alpha +1}$ and let $V := A_1^* \ooplus A_n^*$. Note that $\dim V = nq^2$.

The Iterated Matrix Multiplication polynomial in this setting is $IMM_q^n \in S^n V$, a homogeneous polynomial of degree $n$ on $V^*$ defined by $IMM_q^n (X_1 \vvirg X_n) = \trace(X_n \cdots X_1)$ where, for any $\alpha$, $X_\alpha \in A_\alpha$ is a map $X_\alpha : U_\alpha \to U_{\alpha+1}$ and $\cdot$ indicates the composition of linear maps (or equivalently matrix multiplication, in coordinates).

Notice that $S^nV$ has the following multidegree decomposition
\[
 S^n V \simeq \bigoplus_{\substack{a_1 + \dots + a_n = n \\ a_i \geq 0}} S^{a_1} A_1^* \ootimes S^{a_n} A_n^*;
\]
$IMM_q^n$ belongs to the component $A_1^* \ootimes A_n^*$ of multidegree $(1\vvirg 1)$.

We will denote by $\calI mm _q^n$ either the algebraic hypersurface $V(IMM^n_q) \subseteq \bbP V^*$ (in Section 3) or its affine cone in $V^*$ (in Sections 4 and 5); we will specify which one is considered at the beginning of each section.

Besides the coordinate free definition, it is convenient to have an explicit description of $IMM^n_q$ in terms of bases of the $U_\alpha$'s and of their duals.

For $\alpha = 1 \vvirg n$, let $\{(u_\alpha)^i\}_{i=1 \vvirg q}$ be a basis for $U_\alpha$ and let $\{(\eta _{\alpha})_{j}\}_{j=1 \vvirg q}$ be its dual basis in $U^{*}_\alpha$. Let $(x_\alpha)^i_j$ be the basis of $A_\alpha^*$ defined by $(x_\alpha)^i_j := (u_\alpha)^i \otimes (\eta_{\alpha+1})_j$ and let $(\xi_{\alpha})_i^j := (\eta_\alpha)_i \otimes (u_{\alpha+1})^j$ be the corresponding dual basis of $A_\alpha$. In coordinates, the element $(x_\alpha)^i_j$ can be identified with the linear map sending a matrix $X_\alpha \in A_\alpha$ to the value of its $(i,j)$-th entry and $(\xi_\alpha)^j_i$ can be identified with the matrix having $1$ at the $(i,j)$-th entry and $0$ elsewhere.

With this notation, we have 
\[
 IMM_q^n = \sum_{\ell_1 \vvirg \ell_n =1}^q (x_n)^{\ell_1}_{\ell_{n}} (x_{n-1})^{\ell_{n}}_{\ell_{n-1}} \cdots (x_1)^{\ell_2}_{\ell_1} \in S^n (A_1^* \ooplus A_n^*).
\]
 
 \section{The symmetry group of $IMM^n_q$}

The symmetry group of a polynomial $f \in S^d W$ is the stabilizer of $f$ under the action of $GL(W)$ on $S^dW$. Let $\calS \subseteq GL(V)$ denote the symmetry group of $IMM^n_q \in S^n V$, and let $\calS_0$ be the connected component of the identity in $\calS$.

The goal of this section is to determine the group $\calS$ and to prove that $IMM_q^n$ is characterized by its symmetries, namely that if $f \in S^n V$ is a polynomial whose symmetry group contains $\calS$, then $f$ is a scalar multiple of $IMM^n_q$.

If $G$ is a group and $g,h \in G$, let $h^g$ denote the conjugate of $h$ by $g$, namely $h^g := g h g^{-1}$; similarly, if $H$ is a subgroup of $G$, let $H^g$ denote the conjugate subgroup, namely $H^g := \{ h^g : h \in H\}$. $N_G(H)$ denotes the normalizer of $H$ in $G$, namely $N_G(H) := \{ g \in G: H^g = H\}$. $C_G(H)$ denotes the centralizer of $H$ in $G$, namely $C_G(H) := \{g \in G : h^g = h , \ \forall h \in H \}$.

Let $\Aut(G)$ denote the group of $\bbC$-linear automorphisms of $G$. $\Inn(G)$ denotes the subgroup of inner automorphisms and $\Out(G) := \Aut(G) / \Inn(G)$ is the group of outer automorphisms.

 We record the following standard fact:
\begin{lemma}
 Let $f \in S^d W$ be a polynomial and let $G_f \subseteq GL(W)$ be its stabilizer under the action of $GL(W)$ on $S^d W$. Let $G_f^0$ be the connected component of the identity in $G_f$. Then $G_f \subseteq N_{GL(W)} (G_f^0)$.
 \begin{proof}
  For any $h \in G_f^0$ and any $g \in G_f$ we want to show that $h^g \in G_f^0$. Let $\alpha : [0,1] \to G_f^0$ be a curve such that $\alpha(0) = I_W$ (the identity of $GL(W)$) and $\alpha(1) = h$. Consider $\beta(t) = \alpha(t)^g$. By continuity, the curve $\beta(t)$ satisfies $\beta(0) = I_W$ and $\beta(1) = h^g$.
  
  Moreover, for any $t$, $\beta(t) \cdot f = g \alpha(t) g^{-1} \cdot f = f$ since $g,\alpha(t) \in G_f$. Therefore $\beta(t) \in G_f$ is a curve in $G_f$ from $I_W$ to $h^g$. By continuity $\beta(t) \in G_f^0$ and therefore $h^g \in G_f^0$.
 \end{proof}
\end{lemma}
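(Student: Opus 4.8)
The statement is the standard fact that the identity component of a topological group is normal, specialized to $G := G_f$; I would spell it out concretely as follows. First, note that $G_f$ is a topological group in the subspace topology from $GL(W)$: it is a subgroup because it is a stabilizer, and it is closed, being the fibre over $f$ of the continuous orbit map $g \mapsto g \cdot f$, so multiplication and inversion restrict to continuous operations on it. (Equivalently, $G_f$ is a closed — hence Lie — subgroup of $GL(W)$.)

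Then, for each $g \in G_f$, the conjugation map $c_g : x \mapsto g x g^{-1}$ sends $G_f$ into $G_f$ because $G_f$ is a group; it is continuous with continuous inverse $c_{g^{-1}}$, so it is a self-homeomorphism of $G_f$. A homeomorphism carries each connected component onto a connected component, and $c_g(I_W) = I_W$, so $c_g(G_f^0) = G_f^0$; this is exactly $(G_f^0)^g = G_f^0$, i.e. $g \in N_{GL(W)}(G_f^0)$. As $g \in G_f$ was arbitrary, $G_f \subseteq N_{GL(W)}(G_f^0)$.

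A more hands-on variant avoids the "homeomorphisms permute components" remark at the cost of a path argument: since $G_f^0$ is the identity component of a Lie group it is path-connected, so for $h \in G_f^0$ and $g \in G_f$ one joins $I_W$ to $h$ by a curve $\alpha$ in $G_f^0$, sets $\beta(t) = g \alpha(t) g^{-1}$, observes that $\beta$ is a curve in $G_f$ from $I_W$ to $h^g$ (each $\beta(t)$ fixes $f$ since $g, \alpha(t), g^{-1} \in G_f$), and concludes $h^g = \beta(1) \in G_f^0$. There is no genuine obstacle in either route; the one point not to gloss over is the ingredient that controls $G_f^0$ under conjugation — continuity of $c_g$ together with the fact that homeomorphisms preserve components, or else path-connectedness of $G_f^0$.
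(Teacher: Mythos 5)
Your proof is correct, and your second (path-based) variant is essentially identical to the paper's own argument: conjugate a curve from $I_W$ to $h$ in $G_f^0$, check it stays in $G_f$, and conclude the endpoint lies in the identity component. The first variant you give (conjugation is a self-homeomorphism permuting components and fixing $I_W$) is just a mild abstraction of the same idea, so nothing further is needed.
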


In particular, $\calS \subseteq N_{GL(V)} \calS_0$.

Our first goal is to determine $\calS_0$. This can be done by a calculation involving Lie algebras:
\begin{obser}\label{obs: lie algebra of stab kills the poly}
Let $f \in S^d W$ be a polynomial and let $G$ be a connected Lie group acting on $W$. Let $\frakg$ be the Lie algebra of $G$ and let $\frakh := \{ X \in \frakg : X.f = 0\}$. Then $\frakh$ is the Lie algebra of a subgroup $H \subseteq G$ and $H$ is the connected component of the stabilizer of $f$ in $G$.
\end{obser}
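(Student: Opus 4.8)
The plan is to identify $\frakh$ with the Lie algebra of the stabilizer subgroup $G_f := \{ g \in G : g.f = f \}$ and then invoke the closed subgroup theorem. First I would note that $G_f$ is closed in $G$: since $G$ acts on $W$ by linear maps it acts on $S^d W$ by the induced linear maps, so the orbit map $g \mapsto g.f$ is continuous (indeed a morphism of varieties, when $G$ is algebraic), and $G_f$ is the preimage of the single point $f$. By Cartan's closed subgroup theorem, $G_f$ is an embedded Lie subgroup of $G$; let $H := (G_f)^0$ denote its identity component, a connected Lie subgroup of $G$.

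The heart of the argument is the identification $\mathrm{Lie}(G_f) = \frakh$. For $\mathrm{Lie}(G_f) \subseteq \frakh$: if $X \in \mathrm{Lie}(G_f)$ then $\exp(tX).f = f$ for all $t$, and differentiating at $t = 0$ yields $X.f = 0$, i.e. $X \in \frakh$. For the reverse inclusion, take $X$ with $X.f = 0$ and set $\psi(t) := \exp(tX).f \in S^d W$. Using $\exp((t+s)X) = \exp(sX)\exp(tX)$ together with the definition of the induced $\frakg$-action, $\psi$ satisfies the linear ODE $\psi'(t) = X.\psi(t)$ with $\psi(0) = f$; but since $X.f = 0$, the constant path $t \mapsto f$ solves the very same initial value problem, so by uniqueness $\psi(t) = f$ for all $t$. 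Hence $\exp(tX) \in G_f$, and being a connected curve through the identity it lies in $H$, so $X \in \mathrm{Lie}(H) = \mathrm{Lie}(G_f)$. This proves $\frakh = \mathrm{Lie}(G_f) = \mathrm{Lie}(H)$; in particular $\frakh$ is a Lie subalgebra of $\frakg$, and $H$ is precisely the connected subgroup that integrates it, which by construction is the identity component of the stabilizer of $f$.

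If one prefers to avoid Cartan's theorem, I would instead first check directly that $\frakh$ is a subalgebra — the $G$-action on the symmetric algebra is by algebra automorphisms, so $\frakg$ acts by derivations and the structure map $\frakg \to \mathfrak{gl}(S^d W)$ is a homomorphism of Lie algebras; thus $X.f = Y.f = 0$ forces $[X,Y].f = X.(Y.f) - Y.(X.f) = 0$ — then let $H$ be the connected immersed subgroup with Lie algebra $\frakh$ and rerun the ODE argument to show both $H \subseteq G_f$ and that every one-parameter subgroup of $G_f$ lies in $H$, whence $H = (G_f)^0$.

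The argument is essentially routine. The only step needing a little care is the bookkeeping around the induced $\frakg$-action on $S^d W$ and the translation of the orbit-map behaviour into the linear ODE above — equivalently, the standard fact that the kernel of the differential at the identity of the orbit map $G \to S^d W$, $g \mapsto g.f$, is exactly $\mathrm{Lie}(G_f)$. I do not expect any genuine obstacle beyond this.
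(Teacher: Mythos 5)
Your argument is correct and complete. Note that the paper records this statement as an Observation with no proof at all, treating it as a standard fact; your write-up supplies exactly the standard argument one would give (closedness of the stabilizer, Cartan's theorem, and the linear ODE/uniqueness argument showing $X.f=0$ forces $\exp(tX).f=f$), so there is nothing in the paper to compare it against and no gap to report.
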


For any $\alpha$, $GL(U_\alpha)$ embeds in $GL(V)$ via a homomorphism $\phi_\alpha$ given by the following composition:
\[
 GL(U_{\alpha}) \xto{\bar{\phi}_\alpha} GL(A_{\alpha-1} \oplus A_{\alpha}) \simeq GL(A_{\alpha-1}^* \oplus A_{\alpha}^*) \to GL(V)
\]
where for $g \in GL(U_{\alpha}) $, $\bar{\phi}_\alpha(g) : (X_{\alpha-1}, X_\alpha) \mapsto (g^{-1}X_{\alpha-1},X_\alpha g)$, the middle map is the natural isomorphism between $GL(W)$ and $GL(W^*)$ (given by the transpose inverse) and the last map is a canonical embedding.

This defines a group homomorphism $ \Phi : GL(U_1) \ttimes GL(U_n) \to GL(V)$, whose kernel is $\ker \Phi = \bbC^* (\Id_{U_1} \ttimes \Id_{U_n})$, so that $\Im \Phi \simeq (GL(U_1) \ttimes GL(U_n))/\bbC^*$ is a subgroup of $GL(V)$.

\begin{prop}
 The image of the homomorphism $\Phi$ is $\calS_0$.
 \end{prop}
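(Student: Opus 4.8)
The plan is to show that $\Phi$ factors through $\calS_0$ and that a dimension count then forces $\Im\Phi=\calS_0$. By Observation~\ref{obs: lie algebra of stab kills the poly}, the Lie algebra of $\calS_0$ is $\frakh:=\{Z\in\mathfrak{gl}(V):Z.IMM^n_q=0\}$. The Lie algebra of $GL(U_1)\ttimes GL(U_n)$ consists of tuples $(h_1\vvirg h_n)$ with $h_\alpha\in\mathfrak{gl}(U_\alpha)$, and $\exp\!\big(t\,\mathrm{d}\Phi(h_1\vvirg h_n)\big)$ acts on $V^*\simeq A_1\oplus\dots\oplus A_n$ by $X_\alpha\mapsto X_\alpha+t\,(X_\alpha h_\alpha-h_{\alpha+1}X_\alpha)+O(t^2)$; the derivative at $t=0$ of $IMM^n_q$ along this curve is $\sum_\alpha\trace\big(X_n\cdots X_{\alpha+1}(X_\alpha h_\alpha-h_{\alpha+1}X_\alpha)X_{\alpha-1}\cdots X_1\big)$, in which the $h_{\alpha+1}$-part of the $\alpha$-th summand cancels the $h_\alpha$-part of the $(\alpha+1)$-th; so it vanishes and $\mathrm{d}\Phi(h_1\vvirg h_n)\in\frakh$. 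As $\Im\Phi$ is connected, $\Im\Phi\subseteq\calS_0$, and since $\ker\Phi=\bbC^*$ the space $\mathrm{d}\Phi\big(\mathrm{Lie}(GL(U_1)\ttimes GL(U_n))\big)\subseteq\frakh$ has dimension $nq^2-1$. So it remains to prove the reverse bound $\dim\frakh\le nq^2-1$.

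For this I would decompose $Z\in\mathfrak{gl}(V)$ into blocks $Z=(Z_{\alpha\beta})$ along $V=A_1^*\oplus\dots\oplus A_n^*$. Acting on $IMM^n_q$, the block $Z_{\alpha\beta}$ subtracts $1$ from the $\alpha$-th entry of the multidegree $(1\vvirg1)$ and adds $1$ to the $\beta$-th; hence the $Z_{\alpha\alpha}$ are the only blocks contributing to the multidegree $(1\vvirg1)$ component of $Z.IMM^n_q$, while for each $\gamma\neq\delta$ the block $Z_{\gamma\delta}$ is the only one contributing to the component with $0$ in slot $\gamma$ and $2$ in slot $\delta$. As distinct multidegree components are independent in $S^nV$, the equation $Z.IMM^n_q=0$ separates into the conditions $\trace\big(X_n\cdots X_{\gamma+1}\,\psi_{\gamma\delta}(X_\delta)\,X_{\gamma-1}\cdots X_1\big)\equiv0$ for $\gamma\neq\delta$ — where $\psi_{\gamma\delta}\colon A_\delta\to A_\gamma$ is the linear map underlying $Z_{\gamma\delta}$ — together with the single diagonal condition $\sum_\alpha\trace\big(X_n\cdots X_{\alpha+1}\,\psi_\alpha(X_\alpha)\,X_{\alpha-1}\cdots X_1\big)\equiv0$, where $\psi_\alpha\in\End(A_\alpha)$ underlies $Z_{\alpha\alpha}$.

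For the off-diagonal conditions I would show each $\psi_{\gamma\delta}=0$. Rotating the trace cyclically so that $\psi_{\gamma\delta}(X_\delta)$ comes first, the remaining word is a product of the $n-1$ matrices $\{X_\beta:\beta\neq\gamma\}$, written $W'\,X_\delta\,W''$ with $W',W''$ words in the $n-2\ge1$ matrices $\{X_\beta:\beta\neq\gamma,\delta\}$. A nonempty word in independent matrices can be specialized to an arbitrary linear map, so when both $W',W''$ are nonempty the condition reads $\trace\big(A\,\psi_{\gamma\delta}(X)\,B\,X\big)=0$ for all $X\in A_\delta$ and all matrices $A,B$; fixing $X$ and comparing coefficients in $A$ and $B$ forces $\psi_{\gamma\delta}(X)\otimes X=0$, hence $\psi_{\gamma\delta}=0$. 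If one of $W',W''$ is empty — possible only for $\delta=\gamma\pm1$ — the same manipulation yields $\psi_{\gamma\delta}(X)\,X=0$ (resp.\ $X\,\psi_{\gamma\delta}(X)=0$) identically, and polarizing and substituting an isomorphism $\iota\in A_\delta$ again forces $\psi_{\gamma\delta}=0$. (This step uses $n\ge3$; for $n\le2$ the proposition is false — e.g.\ $IMM^2_q$ is a nondegenerate quadratic form, whose $\calS_0$ is far larger.)

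The crux is the diagonal condition, which I would attack by substituting elementary matrices $X_\alpha=e^{(\alpha)}_{a_\alpha b_\alpha}$: each trace then collapses to one coordinate of $\psi_\alpha$ times $\prod_{\beta\notin\{\alpha,\alpha+1\}}\delta_{b_\beta,a_{\beta-1}}$, so the condition becomes an explicit linear system on the coordinates of $\psi_1\vvirg\psi_n$. Choosing the indices so that $b_\beta=a_{\beta-1}$ fails at exactly two cyclically consecutive positions leaves a single surviving term, showing the coordinate of $\psi_\alpha$ from $e^{(\alpha)}_{ef}$ to $e^{(\alpha)}_{cd}$ vanishes unless $c=e$ or $d=f$; allowing exactly one failure leaves two terms, showing the surviving off-diagonal coordinates of $\psi_\alpha$ come from a single $q\times q$ matrix and link around the cycle with those of $\psi_{\alpha-1}$; allowing no failure shows the diagonal coordinates $\sigma_\alpha(i,j)$ satisfy $\sum_\alpha\sigma_\alpha(a_\alpha,a_{\alpha-1})=0$ for all $(a_1\vvirg a_n)$. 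The off-diagonal data then fills a space of dimension $nq(q-1)$; and the admissible tuples $(\sigma_\alpha)$ form a space of dimension $nq-1$, since a sum of bivariate functions of cyclically consecutive coordinates vanishes identically iff each summand splits as a sum of two univariate functions matching around the cycle, so that a rank--nullity count for $(\sigma_\alpha)\mapsto\big[(a_\bullet)\mapsto\sum_\alpha\sigma_\alpha(a_\alpha,a_{\alpha-1})\big]$ — whose image is spanned by the constants, the $n$ univariate functions, and the bivariate functions on the $n$ cyclic pairs — gives $nq^2-\big(1+n(q-1)+n(q-1)^2\big)=nq-1$. Hence $\dim\frakh\le nq(q-1)+(nq-1)=nq^2-1$, which with the reverse inclusion forces $\frakh=\mathrm{d}\Phi\big(\mathrm{Lie}(GL(U_1)\ttimes GL(U_n))\big)$, and therefore $\calS_0=\Im\Phi$. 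The delicate part, as anticipated, is this final step: tracking which elementary-matrix substitutions isolate which coordinate of which $\psi_\alpha$, and the dimension count for the cyclic additive constraint; the remaining steps are essentially formal.
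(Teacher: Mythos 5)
Your proof is correct and follows essentially the same route as the paper: reduce to the Lie algebra via Observation~\ref{obs: lie algebra of stab kills the poly}, check $\Im d\Phi$ annihilates $IMM^n_q$, use the multidegree decomposition of $Z\cdot IMM^n_q$ to kill the off-diagonal blocks $A_\beta\otimes A_\gamma^*$, and then handle the diagonal blocks. The one difference is that you actually carry out the diagonal-block step (which the paper dismisses as ``a similar, slightly more involved, calculation'') via an explicit dimension count $\dim\frakh\le nq(q-1)+(nq-1)=nq^2-1$ matching $\dim\Im d\Phi$, and you correctly flag the implicit hypothesis $n\ge 3$, without which the statement fails (e.g.\ for $n=2$ the polynomial is a nondegenerate quadric); both checks are sound.
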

 \begin{proof}
The Lie algebra of the image of $\Phi$ coincides with the image of the differential $d \Phi$ of $\Phi$; we will determine this Lie algebra and we will show that it is the annihilator of $IMM_q^n$ in $\frakg \frakl (V)$. For every $\alpha$, we have $\frakg \frakl (U_\alpha) = U_{\alpha}^* \otimes U_\alpha$. Moreover, $\frakg\frakl (V) = V^* \otimes V = \bigoplus_{\beta,\gamma} A_{\beta}\otimes A_{\gamma}^*$.

The image of the differentials $d \phi_\alpha$ is given by

\begin{equation*}
\begin{aligned}
d\phi_\alpha : U_\alpha^* \otimes U_\alpha &\to (A_{\alpha-1} \otimes A_{\alpha-1}^* ) \oplus (A_{\alpha} \otimes A_{\alpha}^*)
\\
L &\mapsto id_{U_{\alpha-1}} \otimes (-L^T) + (L) \otimes id_{U_{\alpha+1}^*},
\end{aligned}
\end{equation*}
where, under the reordering homomorphism, the image is viewed as an element of 
\begin{equation*}
\begin{aligned}
&(U_{\alpha-1}^* \otimes U_\alpha \otimes U_{\alpha-1} \otimes U_{\alpha}^*) \oplus (U_{\alpha}^* \otimes U_{\alpha+1} \otimes U_{\alpha} \otimes U_{\alpha+1}^*) \simeq \\ \simeq &(U_{\alpha-1}^*\otimes U_{\alpha-1} \otimes U_\alpha  \otimes U_{\alpha}^*) \oplus (U_{\alpha}^* \otimes U_{\alpha} \otimes U_{\alpha+1}  \otimes U_{\alpha+1}^*).
\end{aligned}
\end{equation*}
The differential $d\Phi$ is given by the sum of the differentials $d\phi_\alpha$.

In coordinates, on a basis vector $(\eta_\alpha)_i \otimes (u_\alpha)^j$, we have
\begin{align*}
 d\phi_\alpha : (\eta_\alpha)_i \otimes (u_\alpha)^j \mapsto  -&\textsum_{s_{\alpha-1}} (\xi_{\alpha-1})^j_{s_{a-1}} \otimes (x_{\alpha-1})^{s_{\alpha-1}}_{i} +\\
 &+\textsum_{s_{\alpha+1}} (\xi_{\alpha})^{s_{\alpha+1}}_{i} \otimes (x_{\alpha})^{j}_{s_{\alpha+1}}.
\end{align*}
It is immediate that $U_\alpha^* \otimes U_\alpha$ annihilates $IMM_q^n$ (under this embedding) and therefore the image of $d\Phi$ is contained in the Lie algebra annihilating $IMM_q^n$.

In order to prove the other containment, we need to show that if $L \in \frakg\frakl(V)$ annihilates $IMM^n_q$, then it belongs to the image of $d\Phi$. 

Suppose $L$ annihilates $IMM^n_q$. $L$ cannot have a non-trivial component $\tilde{L}$ in $A_\beta \otimes A_\gamma^*$ with $\beta \neq \gamma$. The component $\tilde{L}$ would generate in $L\cdot IMM_q^n$ a non-zero term involving monomials that are quadratic in the variables from $A_\gamma^*$ and with no variable from $A_\beta^*$; such terms can only arise from components in $A_\beta \otimes A_\gamma^*$. Therefore, $\tilde{L}$ has to annihilate $IMM^n_q$; but it is immediate that the images of $IMM^n_q$ under distinct monomials in $A_\beta \otimes A_\gamma^*$ are linearly independent, and this proves $\tilde{L} = 0$.

This shows that the annihilator of $IMM^n_q$ in $\frakg \frakl (V)$ is contained in $\bigoplus_\alpha A_\alpha \otimes A_\alpha^*$.

A similar, slightly more involved, calculation shows that the annihilator coincides with the image of $d\Phi$.

By Observation \ref{obs: lie algebra of stab kills the poly}, we conclude that $\Im \Phi = \calS_0$.
 \end{proof}

 Our next goal is to determine $N_{GL(V)} (\calS_0)$. Notice that every element of $N_{GL(V)} (\calS_0)$ induces, by conjugation, a $\bbC$-linear automorphism on $\calS_0$. Moreover, in our case, the quotient $N_{GL(V)}(\calS_0)/\calS_0$ injects into $\Out(\calS _0)$:
 
\begin{lemma}\label{lemma: uniqueness of inducing element}
 Let $g_1,g_2 \in N_{GL(V)}(\calS_0) \subseteq GL(V)$ be two elements realizing (by conjugation) the same automorphism $\phi \in \Aut(\calS_0)$ on $\calS_0$. Then $g_1^{-1}g_2 \in \Phi(\bbC^* \Id_{U_1} \ttimes \bbC^*\Id_{U_n})$. In particular there exists an injective homomorphism $N_{GL(V)}(\calS_0)/\calS_0 \to \Out(\calS_0)$.
 \begin{proof}
  It suffices to prove that the centralizer $C_{GL(V)}(\calS_0)$ of $\calS_0$ in $GL(V)$ is $\Phi(\bbC^* \Id_{U_1} \ttimes \bbC^*\Id_{U_n})$. Let $c \in C_{GL(V)}(\calS_0)$. Then for every $\alpha$, conjugation by $c$ acts trivially on the copy of $GL(U_\alpha)$ embedded in $GL(V)$. In other words, $c$ defines a $GL(U_\alpha)$-equivariant map from $U_\alpha$ to $U_\alpha$; by Schur's Lemma, such map has to be the identity up to scale.
  
  The second part of the statement follows by the fact that $\Phi(\bbC^* \Id_{U_1} \ttimes \bbC^*\Id_{U_n}) \subseteq \calS_0$.
 \end{proof}
\end{lemma}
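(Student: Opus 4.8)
The plan is to reduce the statement to a computation of the centralizer $C_{GL(V)}(\calS_0)$ inside $GL(V)$, and to carry out that computation by combining the description $\calS_0=\Im\Phi$ from the previous Proposition with Schur's Lemma.

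First, observe that $g_1$ and $g_2$ induce the same automorphism of $\calS_0$ by conjugation if and only if $g:=g_1^{-1}g_2$ induces the trivial one, i.e. if and only if $g\in C_{GL(V)}(\calS_0)$. Thus the first assertion is equivalent to the identity $C_{GL(V)}(\calS_0)=\Phi(\bbC^*\Id_{U_1}\ttimes\bbC^*\Id_{U_n})$. Granting this, the ``in particular'' clause is formal: conjugation yields the standard exact sequence $1\to C_{GL(V)}(\calS_0)\to N_{GL(V)}(\calS_0)\to\Aut(\calS_0)$, whose image contains $\Inn(\calS_0)\simeq\calS_0/Z(\calS_0)$ because $\calS_0\subseteq N_{GL(V)}(\calS_0)$; since $\Phi(\bbC^*\Id_{U_1}\ttimes\bbC^*\Id_{U_n})$ is central in $\calS_0$, we get $C_{GL(V)}(\calS_0)\subseteq\calS_0$, and therefore $N_{GL(V)}(\calS_0)/\calS_0$ injects into $\Aut(\calS_0)/\Inn(\calS_0)=\Out(\calS_0)$.

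Next, to compute the centralizer, use that $\calS_0=\Im\Phi$ is generated by the pairwise-commuting subgroups $\phi_\alpha(GL(U_\alpha))$ and that under $\Phi$ the space $V$ decomposes $\calS_0$-invariantly as $\bigoplus_{\alpha=1}^n A_\alpha^*$ with $A_\alpha^*\simeq U_\alpha\otimes U_{\alpha+1}^*$. Each $A_\alpha^*$ is an irreducible $\calS_0$-module — as a representation of $GL(U_1)\ttimes GL(U_n)$ it is the external tensor product of the standard representation of $GL(U_\alpha)$, the dual standard representation of $GL(U_{\alpha+1})$, and trivial representations of the remaining factors — and for $q\ge2$ these $n$ modules are pairwise non-isomorphic (the case $q=1$ being immediate). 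Hence $V$ is multiplicity-free as an $\calS_0$-module, so by Schur's Lemma every $c\in C_{GL(V)}(\calS_0)$ acts on each summand $A_\alpha^*$ by a scalar. A direct computation shows that $\Phi(t_1\Id_{U_1},\dots,t_n\Id_{U_n})$ acts on $A_\alpha$ by the scalar $t_\alpha t_{\alpha+1}^{-1}$ (indices mod $n$), which gives one inclusion; the claim is then that comparing these block-scalars with the block-scalars centralizing $\calS_0$ forces equality.

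The step I expect to be the real obstacle is precisely this last comparison: Schur's Lemma alone only says that a centralizing element is block-scalar for the decomposition $V=\bigoplus A_\alpha^*$, and one must determine exactly which block-scalars are hit by $\Phi$ on the centre of $GL(U_1)\ttimes GL(U_n)$. Getting this identification exactly right — in particular keeping track of the normalization that a genuine symmetry of $IMM_q^n$ must satisfy, which may force one to organize the argument around the stabilizer $\calS$ itself rather than the full normalizer $N_{GL(V)}(\calS_0)$ — is where the care lies; everything else, including the passage to $\Out(\calS_0)$, is routine once this is pinned down.
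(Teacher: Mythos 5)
Your overall route is the same as the paper's: reduce the lemma to the identity $C_{GL(V)}(\calS_0)=\Phi(\bbC^*\Id_{U_1}\ttimes\bbC^*\Id_{U_n})$, and use Schur's Lemma on the decomposition $V=\bigoplus_\alpha A_\alpha^*$ to show that a centralizing element acts by a scalar $\lambda_\alpha$ on each summand. The difference is that you stop and flag the remaining comparison of block-scalars as the real difficulty, whereas the paper asserts the conclusion at that point. Your instinct is right, and the step you isolated genuinely does not go through as stated: Schur's Lemma gives an \emph{arbitrary} tuple $(\lambda_1\vvirg\lambda_n)\in(\bbC^*)^n$, while $\Phi(t_1\Id_{U_1}\vvirg t_n\Id_{U_n})$ acts on $A_\alpha^*$ by $t_{\alpha+1}t_\alpha^{-1}$, so the block-scalars hit by $\Phi$ are exactly those with $\prod_\alpha\lambda_\alpha=1$. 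Thus $C_{GL(V)}(\calS_0)$ strictly contains $\Phi(\bbC^*\Id_{U_1}\ttimes\bbC^*\Id_{U_n})$; for instance $g_1=\Id_V$, $g_2=2\Id_V$ induce the same (trivial) automorphism of $\calS_0$ but $g_1^{-1}g_2=2\Id_V$ lies in the image of $\Phi$ only if $2^n=1$, and the class of $2\Id_V$ in $N_{GL(V)}(\calS_0)/\calS_0$ is nontrivial yet maps to the trivial outer automorphism. The paper's own proof makes the same silent jump from ``scalar on each block'' to ``in the image of $\Phi$,'' so the defect is in the statement as much as in your attempt.

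The repair is the one you anticipate at the end. Either work modulo global scalars — the quotient $N_{GL(V)}(\calS_0)/\bbC^*\calS_0$ does inject into $\Out(\calS_0)$ by exactly your exact-sequence argument, since $C_{GL(V)}(\calS_0)\subseteq\bbC^*\cdot\Phi(\bbC^*\Id_{U_1}\ttimes\bbC^*\Id_{U_n})\subseteq\bbC^*\calS_0$ — or restrict to elements of the stabilizer $\calS$: if $g\in\calS$ induces an inner automorphism, say the one given by $s\in\calS_0$, then $s^{-1}g\in\calS$ is block-scalar, and since $IMM_q^n$ has multidegree $(1\vvirg1)$ it is rescaled by $\prod_\alpha\lambda_\alpha$, forcing $\prod_\alpha\lambda_\alpha=1$ and hence $s^{-1}g\in\Phi(\bbC^*\Id_{U_1}\ttimes\bbC^*\Id_{U_n})\subseteq\calS_0$. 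Either normalization yields an injection ($N_{GL(V)}(\calS_0)/\bbC^*\calS_0\hookrightarrow\Out(\calS_0)$, respectively $\calS/\calS_0\hookrightarrow\Out(\calS_0)$) that suffices for the way the lemma is used in Theorem \ref{thm: stabilizer of IMM}. So to complete your proof you must carry out the scalar comparison explicitly and adopt one of these two fixes; as written, the final equality you would need is false.
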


The group of outer automorphisms of $\calS_0$ can be characterized in terms of graph automorphisms of its Dynkin diagram:

\begin{prop}[see e.g. \cite{FulHar:RepTh} - Proposition D40]\label{prop: fultonharris - D40}
Let $G$ be a complex semisimple Lie group and let $\Delta$ be its Dynkin diagram. Then the group of outer $\bbC$-linear automorphisms of $G$ is isomorphic to the group of graph automorphisms $\Delta$. Moreover, every automorphism obtained in this way is defined by (the lift to $G$ of) the Lie algebra automorphism obtained by the permutation of the roots induced by the corresponding graph automorphism on $\Delta$.
\end{prop}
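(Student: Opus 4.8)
The plan is to prove the Lie-algebra version of the statement and then transfer it to $G$. Write $\frakg$ for the Lie algebra of $G$; fix once and for all a Cartan subalgebra $\frakh\subseteq\frakg$, a Borel $\frakb\supseteq\frakh$, a base of simple roots $\alpha_1\vvirg\alpha_r$, and Chevalley generators $e_i\in\frakg_{\alpha_i}$, $f_i\in\frakg_{-\alpha_i}$, $h_i=[e_i,f_i]\in\frakh$. Since $\frakg$ is semisimple, $\mathrm{Der}(\frakg)=\mathrm{ad}(\frakg)$, so differentiation identifies the identity component of $\Aut(\frakg)$ with $\Inn(\frakg)$, and for connected $G$ the differential embeds $\Aut(G)\hookrightarrow\Aut(\frakg)$ carrying $\Inn(G)$ onto $\Inn(\frakg)$ (both are the adjoint group). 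Thus it suffices to build an isomorphism $\Out(\frakg)\cong\Aut(\Delta)$ together with an explicit splitting, and then record when this descends to $G$.

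First I would reduce an arbitrary $\phi\in\Aut(\frakg)$, modulo $\Inn(\frakg)$, to an automorphism preserving the chosen pinning. The image $\phi(\frakh)$ is again a Cartan subalgebra, and all Cartan subalgebras are $\Inn(\frakg)$-conjugate, so after composing with an inner automorphism we may assume $\phi(\frakh)=\frakh$; then $\phi$ permutes the root spaces and sends $\frakb$ to some Borel containing $\frakh$. Since the Weyl group $W$ — realized inside $\Inn(\frakg)$ — acts simply transitively on the Borels containing $\frakh$ (equivalently on the bases), a further inner composition makes $\phi(\frakb)=\frakb$, i.e. $\phi(\frakg_{\alpha_i})=\frakg_{\alpha_{\sigma(i)}}$ for a permutation $\sigma$ of the simple indices. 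Because every automorphism of $\frakg$ preserves the Killing form, the induced map on $\frakh^*$ is an isometry permuting the simple roots, hence preserves the Cartan integers $\langle\alpha_i,\alpha_j^\vee\rangle$: $\sigma$ is a graph automorphism of $\Delta$. This defines a homomorphism from pinning-preserving automorphisms to $\Aut(\Delta)$.

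Next I would identify the kernel and prove surjectivity. An automorphism in the kernel fixes $\frakh$ pointwise and scales each $\frakg_\alpha$ by some $t_\alpha\in\bbC^*$; the relations $[\frakg_\alpha,\frakg_\beta]\subseteq\frakg_{\alpha+\beta}$ and $[\frakg_\alpha,\frakg_{-\alpha}]\subseteq\frakh$ force $\alpha\mapsto t_\alpha$ to be a character of the root lattice $Q$, and every character of $Q$ is realized by $\mathrm{Ad}(t)$ for $t$ in the (adjoint) maximal torus, so the kernel lies in $\Inn(\frakg)$. For surjectivity, given $\sigma\in\Aut(\Delta)$ the assignment $e_i\mapsto e_{\sigma(i)}$, $f_i\mapsto f_{\sigma(i)}$, $h_i\mapsto h_{\sigma(i)}$ respects the Serre relations — these involve only the Cartan matrix, which $\sigma$ preserves — so by Serre's presentation it extends to an automorphism $\tilde\sigma\in\Aut(\frakg)$ inducing $\sigma$. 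Hence $\sigma\mapsto\tilde\sigma$ splits the homomorphism, $\Aut(\frakg)=\Inn(\frakg)\rtimes\Aut(\Delta)$, and $\Out(\frakg)\cong\Aut(\Delta)$. The ``moreover'' clause is precisely the observation that the outer class of $\sigma$ is represented by $\tilde\sigma$, the automorphism that permutes the simple root vectors — and, propagating through brackets, all root vectors — according to $\sigma$.

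The steps relying on structure theory (conjugacy of Cartan subalgebras, simple transitivity of $W$ on bases, Serre's presentation, $\mathrm{Der}(\frakg)=\mathrm{ad}(\frakg)$) I would simply quote. I expect the genuine care points to be two: the kernel computation, which hinges on knowing that the adjoint torus surjects onto $\Hom(Q,\bbC^*)$; and the passage from $\frakg$ to $G$, which is the main obstacle to a literal reading of the statement. A diagram automorphism descends to a connected semisimple $G$ only if it preserves the sublattice $\ker(\tilde G\surjects G)\subseteq Z(\tilde G)$, so in general $\Out(G)$ is the subgroup of $\Aut(\Delta)$ stabilizing that sublattice; for the simply connected and adjoint forms (and in the reductive situations arising in this paper, where the relevant lattices are manifestly stable under the graph automorphisms in play) this condition is automatic and one recovers the full $\Aut(\Delta)$.
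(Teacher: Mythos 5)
The paper offers no proof of this proposition at all --- it is quoted verbatim as a known result with a pointer to Fulton--Harris, Proposition D.40 --- so there is nothing to compare your argument against except the standard one, which is exactly what you have written, and correctly. Your reduction to a pinning-preserving automorphism via conjugacy of Cartan subalgebras and simple transitivity of the Weyl group on bases, the kernel computation via characters of the root lattice realized by the adjoint torus, and the splitting via Serre's presentation together give the clean statement $\Aut(\frakg)=\Inn(\frakg)\rtimes\Aut(\Delta)$, which is the content of the cited result. Your closing caveat is in fact a genuine improvement on the proposition as quoted: Fulton--Harris's D.40 is a statement about $\Out(\frakg)$, and the group-level version asserted here is literally false for arbitrary connected semisimple $G$ (e.g.\ the half-spin group in type $D_4$, where triality permutes the three order-two subgroups of the center, so $\Out(G)\subsetneq\Aut(\Delta)$); the correct statement is that $\Out(G)$ is the stabilizer in $\Aut(\Delta)$ of $\ker(\tilde G\to G)\subseteq Z(\tilde G)$. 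You rightly observe that this is harmless for the application in Section 2, where the paper only ever uses the containment $\Out(\calS_0)\hookrightarrow$ (graph automorphisms) together with Proposition \ref{prop: garibaldi} to bound the normalizer from above, and separately exhibits explicit elements of $GL(V)$ realizing the relevant automorphisms --- one further wrinkle worth noting is that $\calS_0$ there is reductive rather than semisimple, so the proposition is really being applied to its semisimple part. No gaps.
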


Let $G$ be a complex semisimple Lie group with Dynkin diagram $\Delta$, and let $\lambda$ be a weight for $G$. We denote with $\Delta(\lambda)$ the \emph{marked Dynkin diagram} of the irreducible representation of $G$ of highest weight $\lambda$, namely the Dynkin diagram $\Delta$ where the vertex corresponding to the fundamental weight $w$ is labeled by the integer coefficient of $w$ in the expression of $\lambda$ as sum of the fundamental weights; notice that the group of graph automorphisms $\Aut(\Delta)$ has a natural action on the set $\Lambda_G := \{ \Delta(\lambda) : \lambda \text{ weight}\}$ of all possible marked Dynkin diagram of $G$. The following result will allow us to determine the subgroup of $\Out(\calS_0)$ of automorphisms that can be realized as conjugation by elements of $GL(V)$:

\begin{prop}[\cite{BeGaLa:Linear_preservers}, Prop. 2.2 and Cor. 2.4]\label{prop: garibaldi}
 Let $\rho: G \to GL(W)$ be a representation of $G$. Suppose $W = W_1 \ooplus W_k$ with $W_j$ distinct irreducible representations of $G$ with $\lambda_j = hw(W_j)$. Let $\phi \in \Out(G) \simeq \Aut(\Delta)$ be an automorphism that can be realized via conjugation by an element of $GL(W)$. Then $\phi$, viewed as an element of $\Aut(\Delta)$, stabilizes the set $\{\Delta(\lambda_j) : j =1 \vvirg k\} \subseteq \Lambda$.
\end{prop}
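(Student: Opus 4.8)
The plan is to translate the hypothesis into a statement about representations and then to appeal to uniqueness of the decomposition into irreducibles. To say that $\phi$ is realized by conjugation by some $g \in GL(W)$ means precisely that $g\,\rho(h)\,g^{-1} = \rho(\phi(h))$ for all $h \in G$; equivalently, the twisted representation $\rho^\phi := \rho \circ \phi$ is isomorphic to $\rho$, the element $g$ furnishing an intertwiner. So it suffices to identify the irreducible constituents of $\rho^\phi$ and to compare them with those of $\rho$.

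First I would fix a maximal torus $T$ and a Borel subgroup $B \supseteq T$ of $G$ and, using Proposition \ref{prop: fultonharris - D40}, choose the representative of $\phi$ in $\Aut(G)$ that lifts the Lie algebra automorphism induced by the graph automorphism $\psi \in \Aut(\Delta)$ associated with $\phi$; this representative can be taken to preserve the pinning, i.e. to stabilize both $T$ and $B$ (hence the unipotent radical of $B$). With this choice, each $W_j \subseteq W$ is $\rho^\phi$-invariant, and a $\rho$-highest weight vector $v_j \in W_j$ of weight $\lambda_j$ remains a highest weight vector for $\rho^\phi|_{W_j}$, now of weight $\lambda_j \circ \phi$. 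Writing weights in the basis of fundamental weights, precomposition with $\phi$ permutes the coefficients according to $\psi$ (up to replacing $\psi$ by $\psi^{-1}$, which does not affect the conclusion), so the marked Dynkin diagram of $W_j$ as a constituent of $\rho^\phi$ is $\psi^{\pm1}\big(\Delta(\lambda_j)\big)$.

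Next I would conclude by uniqueness. Since the $W_j$ are pairwise non-isomorphic, $W = W_1 \ooplus W_k$ is, up to reordering, the unique decomposition of the ambient space into $\rho$-irreducibles, and the same subspaces give the unique decomposition into $\rho^\phi$-irreducibles. An isomorphism $\rho^\phi \cong \rho$ therefore matches the two sets of isomorphism classes of constituents, i.e. the two sets of marked Dynkin diagrams: $\{\Delta(\lambda_j) : j = 1 \vvirg k\} = \{\psi^{\pm1}(\Delta(\lambda_j)) : j = 1 \vvirg k\}$. Applying $\psi^{\mp1}$ to both sides shows that $\psi$ — equivalently $\phi$, viewed in $\Aut(\Delta)$ — stabilizes $\{\Delta(\lambda_j) : j = 1 \vvirg k\}$, which is the assertion.

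I expect the only delicate point to be the middle step: making precise how an outer automorphism of $G$ acts on highest weights and on the corresponding marked Dynkin diagrams, and in particular checking that the graph automorphism can be lifted so as to preserve a chosen pinning, so that highest weight vectors are carried to highest weight vectors. The alternative, involving a further twist by $-w_0$, would only permute the roles of $\psi$ and $\psi^{-1}$ in the argument and leave the conclusion unchanged. Everything else is standard highest weight theory together with uniqueness of the isotypic decomposition.
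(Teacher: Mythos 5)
The paper offers no proof of this proposition --- it is imported directly from the cited reference --- so there is nothing in the text to compare your argument against; judged on its own, your proof is correct and is the standard argument for this fact. The three steps are all sound: realizability by conjugation is exactly the statement $\rho\circ\tilde{\phi}\cong\rho$ for some representative $\tilde{\phi}$ of the class $\phi$; replacing $\tilde{\phi}$ by a pinning-preserving representative changes $\rho\circ\tilde{\phi}$ only up to isomorphism (you are composing with an inner automorphism), and for that representative each $W_j$ becomes the irreducible of highest weight $\lambda_j\circ\tilde{\phi}$, whose marked diagram is $\psi^{\pm1}(\Delta(\lambda_j))$; uniqueness of the decomposition into irreducibles then gives $\psi^{\pm1}\{\Delta(\lambda_j)\}=\{\Delta(\lambda_j)\}$, and since a bijection of a finite set stabilizes a finite subset if and only if its inverse does, the sign ambiguity is immaterial, as you note. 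The only delicate point is the one you already flag --- that the graph automorphism lifts to an automorphism of $G$ preserving a chosen maximal torus and Borel --- and this is supplied by the quoted Proposition \ref{prop: fultonharris - D40} together with the fact that any automorphism can be adjusted by an inner one to preserve a given Borel pair. I have no objections.
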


If $G,H$ are two groups, with $H \subseteq \frakS_d$ for some positive integer $d$, let $G \wr H$ denote the wreath product given by the action of $H$ on $d$ copies of $G$, namely $G \wr H := (\prod_1^d G) \rtimes H$, where $H$ acts by permutations on the copies of $G$ appearing in the direct product. We classify the group of graph automorphisms of the Dynkin diagram of $\calS_0$:

\begin{lemma}
 We have $\Out(\calS_0) \simeq \bbZ_2 \wr \frakS_n$.
 \begin{proof}
The Dynkin diagram $\Delta$ of $\calS_0$ is
\[
\begin{tikzpicture}
	\draw (0, 3) -- (1,3);
	\draw[dashed] (1, 3) -- (4,3);
	\draw (4, 3) -- (5,3);
	\draw[fill=white] (0,3) circle(.15);
	\draw[fill=white] (1,3) circle(.15);
	\draw[fill=white] (4,3) circle(.15);
	\draw[fill=white] (5,3) circle(.15);
	
	\draw[yshift=.3cm] (0,3) node{$~_1$};
	\draw[yshift=.3cm] (1,3) node{$~_2$};
	\draw[yshift=.3cm] (4,3) node{$~_{q-2}$};
	\draw[yshift=.3cm] (5,3) node{$~_{q-1}$};

	\draw (0, 2) -- (1,2);
	\draw[dashed] (1, 2) -- (4,2);
	\draw (4, 2) -- (5,2);
	\draw[fill=white] (0,2) circle(.15);
	\draw[fill=white] (1,2) circle(.15);
	\draw[fill=white] (4,2) circle(.15);
	\draw[fill=white] (5,2) circle(.15);

	\draw[yshift=.3cm] (0,2) node{$~_1$};
	\draw[yshift=.3cm] (1,2) node{$~_2$};
	\draw[yshift=.3cm] (4,2) node{$~_{q-2}$};
	\draw[yshift=.3cm] (5,2) node{$~_{q-1}$};
	
	\draw (2.5,1) node{$\vdots$};
	
	\draw (0, 0) -- (1,0);
	\draw[dashed] (1, 0) -- (4,0);
	\draw (4,0) -- (5,0);
	
	\draw[fill=white] (0,0) circle(.15);
	\draw[fill=white] (1,0) circle(.15);
	\draw[fill=white] (4,0) circle(.15);
	\draw[fill=white] (5,0) circle(.15);
	
	\draw[yshift=.3cm] (0,0) node{$~_1$};
	\draw[yshift=.3cm] (1,0) node{$~_2$};
	\draw[yshift=.3cm] (4,0) node{$~_{q-2}$};
	\draw[yshift=.3cm] (5,0) node{$~_{q-1}$};
	
	\draw (-.8,3) node{$~_{\bfA_{q-1}^{(1)}:}$};
	\draw (-.8,2) node{$~_{\bfA_{q-1}^{(2)}:}$};
	\draw (-.8,0) node{$~_{\bfA_{q-1}^{(n)}:}$};
% 	
% 	\draw [decorate,decoration={brace,amplitude=5pt,raise=4pt}]
% (-0.5,-0.2) -- (-0.5,3.2) node [midway,xshift=-1cm] {$n \ \text{copies} \ \ $};
% 
% 	\draw [decorate,decoration={brace,mirror,amplitude=5pt,raise=4pt}]
% (-0.2,-0.3) -- (5.2,-0.3) node [midway,yshift=-0.5cm] {$q-1 \ \text{vertices}$};
\end{tikzpicture}
\]
where the $\alpha$-th diagram $\bfA _{q-1}^{(\alpha)}$ corresponds to the embedded copy of $GL(U_\alpha)$.

The permutation group $\frakS_n$ acts on $\Delta$ by permuting the copies of $\bfA_{q-1}^{(\alpha)}$. Moreover, each $\bfA_{q-1}^{(\alpha)}$ has a copy of $\bbZ_2$ acting on it by reversing the order of the vertices. Let $\bbZ_2^{(\alpha)}$ denote the copy acting on $\bfA_{q-1}^{(\alpha)}$. Therefore $\Aut(\Delta) = \left(\prod_{\alpha = 1}^n \bbZ_2^{(\alpha)}\right) \rtimes \frakS_n = \bbZ_2 \wr \frakS_n$. We conclude, by Proposition \ref{prop: fultonharris - D40}.
 \end{proof}
\end{lemma}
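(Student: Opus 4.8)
The plan is to identify the Dynkin diagram $\Delta$ attached to $\calS_0$, use Proposition~\ref{prop: fultonharris - D40} to identify $\Out(\calS_0)$ with the graph-automorphism group $\Aut(\Delta)$, and then read off $\Aut(\Delta)$ by an elementary combinatorial argument.

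First I would pin down $\Delta$. Recall that $\calS_0 = \Im \Phi \simeq (GL(U_1) \ttimes GL(U_n))/\bbC^*$, where the $\bbC^*$ is the diagonal central torus; hence the derived group of $\calS_0$ is, up to a finite central subgroup, the product $\prod_{\alpha=1}^n SL(U_\alpha)$, and each factor $SL(U_\alpha)\simeq SL_q$ is of type $\bfA_{q-1}$. Therefore $\Delta$ is the disjoint union $\bigsqcup_{\alpha=1}^n \bfA_{q-1}^{(\alpha)}$ of $n$ copies of the type-$\bfA_{q-1}$ diagram, the copy $\bfA_{q-1}^{(\alpha)}$ corresponding to the embedded factor $GL(U_\alpha)$. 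A point that needs care here is that $\calS_0$ is reductive, not semisimple, so Proposition~\ref{prop: fultonharris - D40} does not literally apply to it; one must check that the central torus contributes no extra outer automorphisms, i.e.\ that $\Out(\calS_0)$ agrees with the outer automorphism group of its derived (semisimple) subgroup, which then is governed by $\Delta$.

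Next I would compute $\Aut(\Delta)$. An automorphism of a disjoint union of $n$ pairwise isomorphic connected diagrams amounts to a permutation of the $n$ components together with, on each component, an automorphism of $\bfA_{q-1}$; since no edge joins distinct components, every permutation of the components occurs. Hence $\Aut(\Delta)\simeq\bigl(\prod_{\alpha=1}^n \Aut(\bfA_{q-1}^{(\alpha)})\bigr)\rtimes \frakS_n$. The diagram $\bfA_{q-1}$ is a path, whose only nontrivial symmetry is the order-reversal of its vertices, so $\Aut(\bfA_{q-1}^{(\alpha)})\simeq\bbZ_2$ (for the relevant range $q\ge 3$; the cases $q\le 2$ are degenerate). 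Writing $\bbZ_2^{(\alpha)}$ for this copy, we obtain $\Aut(\Delta)\simeq\bigl(\prod_{\alpha=1}^n\bbZ_2^{(\alpha)}\bigr)\rtimes\frakS_n = \bbZ_2\wr\frakS_n$, and combining with the previous step completes the proof.

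The main obstacle is the reductive-versus-semisimple issue in the identification $\Out(\calS_0)\simeq\Aut(\Delta)$: Proposition~\ref{prop: fultonharris - D40} is stated for semisimple groups, and an automorphism of a reductive group could in principle mix in characters of the center, so one has to argue that for a group of the shape ``product of copies of $GL_q$ modulo a central torus'' no such extra automorphisms survive. Granting this, everything else is bookkeeping. It is also worth recording at this stage the geometric meaning of the generators: the factor $\bbZ_2^{(\alpha)}$ is induced by the transpose-inverse automorphism of $GL(U_\alpha)$ and $\frakS_n$ permutes the embedded factors $GL(U_\alpha)$; which of these outer automorphisms are actually realized by conjugation inside $GL(V)$ is then decided afterwards via Proposition~\ref{prop: garibaldi}.
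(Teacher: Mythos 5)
Your proposal follows essentially the same route as the paper: identify the Dynkin diagram of $\calS_0$ as a disjoint union of $n$ copies of $\bfA_{q-1}$, compute $\Aut(\Delta) = \bigl(\prod_{\alpha=1}^n \bbZ_2^{(\alpha)}\bigr)\rtimes\frakS_n = \bbZ_2\wr\frakS_n$, and invoke Proposition \ref{prop: fultonharris - D40}. The reductive-versus-semisimple caveat you raise is a fair point of rigor that the paper itself passes over silently, but it does not change the argument.
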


\begin{obser}
 We have $V = A_1^* \ooplus A_n^*$ and $A_\alpha = U_\alpha^* \otimes U_{\alpha + 1}$. The irreducible representations of $\calS_0$ occurring in $V$ are therefore $U_\alpha^* \otimes U_{\alpha + 1}$ for every $\alpha$. The marked Dynkin diagram of $A_\alpha$ is
 \[ \begin{tikzpicture}
	\draw (0,5) -- (1,5);
	\draw[dashed] (1,5) -- (4,5);
	\draw (4,5) -- (5,5);
	\draw[fill=white] (0,5) circle(.15);
	\draw[fill=white] (1,5) circle(.15);
	\draw[fill=white] (4,5) circle(.15);
	\draw[fill=white] (5,5) circle(.15);

	\draw (2.5,4) node{$\vdots$};

  	\draw (0, 3) -- (1,3);
	\draw[dashed] (1, 3) -- (4,3);
	\draw (4, 3) -- (5,3);
	\draw[fill=black] (0,3) circle(.15);
	\draw[fill=white] (1,3) circle(.15);
	\draw[fill=white] (4,3) circle(.15);
	\draw[fill=white] (5,3) circle(.15);
	
	\draw (0, 2) -- (1,2);
	\draw[dashed] (1, 2) -- (4,2);
	\draw (4, 2) -- (5,2);
	\draw[fill=white] (0,2) circle(.15);
	\draw[fill=white] (1,2) circle(.15);
	\draw[fill=white] (4,2) circle(.15);
	\draw[fill=black] (5,2) circle(.15);

	\draw (2.5,1) node{$\vdots$};
	
	\draw (0, 0) -- (1,0);
	\draw[dashed] (1, 0) -- (4,0);
	\draw (4,0) -- (5,0);
	
	\draw[fill=white] (0,0) circle(.15);
	\draw[fill=white] (1,0) circle(.15);
	\draw[fill=white] (4,0) circle(.15);
	\draw[fill=white] (5,0) circle(.15);

	\draw (-.8,5) node{$~_{\bfA_{q-1}^{(1)}:}$};
	\draw (-.8,3) node{$~_{\bfA_{q-1}^{(\alpha)}:}$};
	\draw (-.8,2) node{$~_{\bfA_{q-1}^{(\alpha+1)}:}$};
	\draw (-.8,0) node{$~_{\bfA_{q-1}^{(n)}:}$};
	
% 	
% 	\draw [decorate,decoration={brace,amplitude=5pt,raise=4pt}]
% (-0.5,-0.2) -- (-0.5,3.2) node [midway,xshift=-1cm] {$n \ \text{copies} \ \ $};
% 
% 	\draw [decorate,decoration={brace,mirror,amplitude=5pt,raise=4pt}]
% (-0.2,-0.3) -- (5.2,-0.3) node [midway,yshift=-0.5cm] {$q-1 \ \text{vertices}$};
\end{tikzpicture}
 \]
where the black filling means that the vertex is labeled with a $1$.
\end{obser}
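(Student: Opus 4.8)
The plan is to read off both assertions directly from the explicit presentation $\calS_0 = \Im\Phi \cong (GL(U_1)\ttimes GL(U_n))/\bbC^*$ obtained above, together with the standard description of the irreducible representations of a product of reductive groups as outer tensor products of irreducibles of the factors. First I would determine the $\calS_0$-module structure of $V$. By definition $V = A_1^*\ooplus A_n^*$, so it suffices to analyze each summand $A_\beta^*$. The factor $GL(U_\alpha)$ of $\calS_0$ acts on $V$ through $\phi_\alpha$, and by the definition of $\bar{\phi}_\alpha$ this action touches only the summands $A_{\alpha-1}^*$ and $A_\alpha^*$ (it is left, resp.\ right, matrix multiplication on $A_{\alpha-1}$ and $A_\alpha$, transported to $V$ by the transpose-inverse isomorphism); hence on a fixed summand $A_\beta^*$ only the two cyclically adjacent factors $GL(U_\beta)$ and $GL(U_{\beta+1})$ act nontrivially. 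A direct computation of these actions — tracking the inverses in $\bar{\phi}_\alpha$ and the transpose-inverse used to land in $GL(V)$ — identifies $A_\beta^*$, as a module over $GL(U_\beta)\times GL(U_{\beta+1})$ with the remaining factors trivial, with an outer tensor product of the standard representation of one of these two factors and the dual standard representation of the other, matching the module $U_\beta^*\otimes U_{\beta+1}$ of the statement. Such a module is irreducible, and it descends to the quotient $\calS_0$ since the diagonal central $\bbC^*$ acts on it by $\lambda\mapsto\lambda\cdot\lambda^{-1}=\id$. Thus the $n$ summands $A_1^*\vvirg A_n^*$ are precisely the irreducible constituents of $V$ (and one checks that they are pairwise non-isomorphic); this is the first claim. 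Here one takes $n\geq 3$, as in the preceding Proposition — for $n\leq 2$ the polynomial $IMM_q^n$ degenerates to a linear, resp.\ a nondegenerate quadratic, form and $\calS_0$ is larger.

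For the second claim, recall from the proof of the Lemma computing $\Out(\calS_0)$ that the Dynkin diagram of the semisimple part $SL(U_1)\ttimes SL(U_n)$ of $\calS_0$ is the disjoint union $\bfA_{q-1}^{(1)}\sqcup\cdots\sqcup\bfA_{q-1}^{(n)}$, the vertices of $\bfA_{q-1}^{(\gamma)}$ being in bijection with the fundamental weights $\omega_1^{(\gamma)}\vvirg\omega_{q-1}^{(\gamma)}$ of $SL(U_\gamma)\cong SL_q$. The standard representation of $SL_q$ is the irreducible of highest weight $\omega_1$ (an end vertex of the diagram), its dual the irreducible of highest weight $\omega_{q-1}$ (the other end vertex). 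Combining with the previous step, the highest weight of $A_\beta^*$ restricted to the semisimple part of $\calS_0$ is $\omega^{(\beta)} + {\omega'}^{(\beta+1)}$ for $\{\omega,\omega'\} = \{\omega_1,\omega_{q-1}\}$, all other fundamental weights occurring with coefficient $0$. This is exactly the marked diagram displayed in the statement: a single label $1$ on one end vertex of $\bfA_{q-1}^{(\beta)}$ and a single label $1$ on the opposite end vertex of $\bfA_{q-1}^{(\beta+1)}$.

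The only genuinely delicate point — and the step I would be most careful about — is the bookkeeping in the first paragraph: deciding, for the summand $A_\beta^*$, which of the two adjacent copies of $\bfA_{q-1}$ carries the fundamental weight $\omega_1$ and which carries $\omega_{q-1}$. This amounts to propagating correctly the two sources of inverses, the $g^{-1}$ in $\bar{\phi}_\alpha$ and the transpose-inverse $GL(W)\to GL(W^*)$, through the composition defining $\Phi$. Everything else is formal: irreducibles of $\prod_\gamma GL(U_\gamma)$ are outer tensor products of irreducibles of the factors, tensoring by a power of a determinant character (which is trivial on $\calS_0$) does not alter the marked diagram, and the fundamental weights $\omega_1$ and $\omega_{q-1}$ of type $\bfA_{q-1}$ correspond to the two end vertices of its Dynkin diagram.
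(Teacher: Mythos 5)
Your argument is correct and is precisely the justification the paper leaves implicit: the Observation is stated without proof, and your identification of each summand of $V$ as an outer tensor product of the standard and the dual standard representations of the two cyclically adjacent factors $GL(U_\beta)$, $GL(U_{\beta+1})$, followed by reading off the fundamental weights $\omega_1$ and $\omega_{q-1}$ at the end vertices of the corresponding $\bfA_{q-1}$ components, is the intended reasoning. The delicate points you flag (the dual-versus-non-dual bookkeeping, and the implicit assumption $n\geq 3$) are well taken but harmless here, since exchanging $A_\alpha$ with $A_\alpha^*$ merely swaps which end vertex carries the label $1$ and leaves the displayed set of marked diagrams unchanged.
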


We can now completely characterize the stabilizer $\calS$ of $IMM_q^n$.
\begin{thm}\label{thm: stabilizer of IMM}
 The stabilizer $\calS$ of $IMM_q^n$ in $GL(V)$ is isomorphic to $\calS_0 \rtimes D_n$, where $D_n$ is the dihedral group of the regular $n$-gon. In particular $D_n = \bbZ_n \rtimes \bbZ_2$, where $\bbZ_n \subseteq \frakS_n$ is the cyclic group generated by the $n$-cycle $(1\vvirg n)$ acting by permuting the $X_\alpha$'s and $\bbZ_2 = \langle \tau \rangle$ acts by $\tau(X_1 \vvirg X_n) = (X_n^T\vvirg X_1^T)$.
\end{thm}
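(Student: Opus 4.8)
The plan is to combine the two structural facts already established — that $\calS \subseteq N_{GL(V)}(\calS_0)$ and that $N_{GL(V)}(\calS_0)/\calS_0$ injects into $\Out(\calS_0) \simeq \bbZ_2 \wr \frakS_n$ — with the representation-theoretic constraint of Proposition \ref{prop: garibaldi}, and then exhibit explicit elements of $GL(V)$ realizing a dihedral group of symmetries. First I would determine which outer automorphisms of $\calS_0$ can even be realized by conjugation in $GL(V)$: by Proposition \ref{prop: garibaldi}, any such automorphism, viewed inside $\Aut(\Delta) = \bbZ_2 \wr \frakS_n$, must stabilize the set of marked Dynkin diagrams $\{\Delta(\lambda_\alpha)\}$ corresponding to the irreducible summands $A_\alpha^* = U_\alpha \otimes U_{\alpha+1}^*$ of $V$. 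Reading off the marked diagram of $A_\alpha^*$ from the Observation, the vertex labeled $1$ lies at the \emph{last} node of the copy $\bfA_{q-1}^{(\alpha)}$ and at the \emph{first} node of the copy $\bfA_{q-1}^{(\alpha+1)}$ (or symmetrically, depending on conventions). Thus each $\Delta(\lambda_\alpha)$ "links" the consecutive copies $\alpha$ and $\alpha+1$ in a directed/oriented fashion, so a permutation $\sigma \in \frakS_n$ together with a tuple of flips $(\epsilon_1,\dots,\epsilon_n) \in \bbZ_2^n$ preserves this set of configurations precisely when $\sigma$ is a rotation or reflection of the cyclic arrangement $1 \to 2 \to \cdots \to n \to 1$ and the flips are forced accordingly — that is, the subgroup of $\bbZ_2 \wr \frakS_n$ preserving $\{\Delta(\lambda_\alpha)\}$ is exactly a copy of the dihedral group $D_n$.

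Next I would verify that every element of this $D_n$ is actually realized by an element of $GL(V)$, by writing down the maps explicitly. For the rotation $\rho = (1\, 2\, \cdots\, n)$, the linear map on $V = \bigoplus_\alpha A_\alpha^*$ permuting the summands $A_\alpha^* \to A_{\alpha+1}^*$ (via the identifications $U_\alpha \cong U_{\alpha+1}$ coming from the chosen bases) sends $(X_1,\dots,X_n)$ to a cyclic shift, and since $\trace(X_n \cdots X_1) = \trace(X_{n-1}\cdots X_1 X_n)$ this fixes $IMM_q^n$. For the reflection $\tau$, the map $(X_1,\dots,X_n) \mapsto (X_n^T, \dots, X_1^T)$ is linear on $V^*$, induces a linear map on $V$, and fixes $IMM_q^n$ because $\trace\bigl((X_1^T)(X_2^T)\cdots(X_n^T)\bigr) = \trace\bigl((X_n\cdots X_1)^T\bigr) = \trace(X_n \cdots X_1)$; here one must be slightly careful with the reversal of order under transposition and the cyclic index convention, but this is routine. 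These two elements generate a copy of $D_n$ inside $\calS$, and by Lemma \ref{lemma: uniqueness of inducing element} distinct elements of $D_n$ induce distinct outer automorphisms, so the composite $D_n \hookrightarrow \calS \to \calS/\calS_0 \hookrightarrow \Out(\calS_0)$ is injective with image the dihedral subgroup identified above.

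Combining the two halves: $\calS/\calS_0$ injects into $\Out(\calS_0)$ with image contained in $D_n$ (by the Garibaldi constraint), and it contains $D_n$ (by the explicit construction), so $\calS/\calS_0 \simeq D_n$. To upgrade this to a semidirect product decomposition $\calS \simeq \calS_0 \rtimes D_n$ I would note that the explicit lifts $\rho, \tau$ constructed above are honest finite-order elements of $GL(V)$ (the rotation has order $n$, the reflection order $2$, and they satisfy the dihedral relation $\tau \rho \tau^{-1} = \rho^{-1}$), so they span a genuine subgroup $D_n \subseteq \calS$ meeting $\calS_0$ trivially and surjecting onto the quotient; hence $\calS = \calS_0 \rtimes D_n$. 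Finally I would record the explicit description of the action: $\bbZ_n$ is generated by the cyclic shift of the $X_\alpha$, and $\bbZ_2 = \langle \tau \rangle$ by $(X_1,\dots,X_n) \mapsto (X_n^T,\dots,X_1^T)$, as in the statement.

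The main obstacle I anticipate is the first paragraph — carefully matching the marked-Dynkin-diagram combinatorics of the summands $A_\alpha^*$ against the action of $\bbZ_2 \wr \frakS_n$ to pin down the stabilizing subgroup as exactly $D_n$ and not something larger (e.g. checking that an "orientation-reversing" permutation forces a compensating flip on every copy, and that no extra flips are allowed). The explicit realization of $\rho$ and $\tau$ is comparatively straightforward bookkeeping with the bases $(x_\alpha)^i_j$ and the cyclic index convention $U_{n+1} = U_1$.
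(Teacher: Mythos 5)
Your proposal is correct and follows essentially the same route as the paper: constrain $\calS/\calS_0 \hookrightarrow \Out(\calS_0) \simeq \bbZ_2 \wr \frakS_n$ to the stabilizer of the set of marked Dynkin diagrams of the summands $A_\alpha$ via Proposition \ref{prop: garibaldi}, identify that stabilizer combinatorially as $D_n$ (rotations, plus the order-reversing permutation coupled with a simultaneous flip $\tau_\alpha$ on every copy of $\bfA_{q-1}$), and then realize $\rho$ and $\tau$ explicitly in $GL(V)$ as symmetries of $IMM_q^n$ to get the semidirect product. The combinatorial step you flag as the main obstacle is exactly the argument the paper carries out, so no gap.
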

\begin{proof}
Fix bases for each of the $U_\alpha$'s and the corresponding dual bases of $U_\alpha^*$'s. Define isomorphisms $\phi_{\beta\alpha} : GL(U_\alpha) \to GL(U_\beta)$ identifying the bases of $U_\alpha$ and $U_\beta$: in particular, these isomorphisms satisfy a cocycle type condition $\phi_{\alpha\gamma}\circ\phi_{\gamma\beta}\circ \phi_{\beta\alpha} = id_{U_\alpha}$ for every $\alpha,\beta,\gamma$. Moreover, let $\delta_\alpha : U _\alpha \to U_\alpha^*$ be the isomorphism defined by identifying the basis of $U_\alpha$ with its dual (so in particular $(\delta^{-1})^T = \delta$).

For every $\alpha$ define $\tau_\alpha : GL(U_\alpha) \to GL(U_\alpha)$ by $\tau_\alpha(g_\alpha): u \mapsto \delta_\alpha^{-1}\circ(g_\alpha^T)^{-1}\circ \delta_\alpha (u)$, where $(g_\alpha^T)^{-1}$ is canonically an element of $GL(U_\alpha^*)$. It is clear that $\tau_\alpha$ defines an automorphism of $GL(U_\alpha)$ that is outer whenever $q > 2$ and has order $2$. In particular, for every $\alpha$, $\Out(GL(U_\alpha)) \simeq \Aut(\bfA_{q-1}^{(\alpha)}) \simeq \bbZ_2^{(\alpha)} \simeq \langle \tau_\alpha \rangle$.

Moreover, the group $\frakS_n \subseteq \Out(\calS_0)$ can be viewed as acting by the isomorphisms $\phi_{\alpha\beta}$. We obtain
\[
 \Out(\calS_0) = \left(\prod_{\alpha = 1}^{n} \langle \tau_\alpha \rangle \right) \rtimes \frakS_n.
\]
We want to determine the subgroup of $\Out(\calS_0)$ fixing (as a set) the set of marked Dynkin diagrams $ \Sigma_\Delta = \{\Delta(A_\alpha)\}$. Notice that the cycle $\rho = (1 \vvirg n) \in \frakS_n \subseteq \Out(\calS_0)$ sends $\Delta(A_\alpha)$ in $\Delta(A_{\alpha+1})$ for every $\alpha$. Therefore it stabilizes $\Sigma_\Delta$. Moreover, the element of $\left(\textprod_{1}^{n} \langle \tau_\alpha \rangle \right) \rtimes \frakS_n$ defined by
\begin{align*}
 \bftau = \left((\tau_1 \vvirg \tau_n), \left(\begin{array}{cccc}
                                             1 & 2 & \cdots & n \\
                                             n & n-1 & \cdots & 1
                                            \end{array}
 \right)\right)
\end{align*}
sends $\Delta(A_\alpha)$ to $\Delta(A_{n+1-\alpha})$, so it stabilizes $\Sigma_\Delta$. Now, suppose that $\psi$ is an outer automorphism stabilizing $\Sigma_\Delta$. By the action of $\langle \rho \rangle$ we may assume that it fixes $\Delta(A_1)$. There are two possibilities: either the marked vertices of $\Delta(A_1)$ are fixed, or they are swapped. If they are swapped by $\psi$, we can act by $\bftau$ and then again by $\langle \rho \rangle$ and assume that they are fixed. This means that the first vertex of $\bfA_{q-1}^{(2)}$ is fixed and therefore the entire $\bfA_{q-1}^{(2)}$ is fixed. In particular $\Delta(A_2)$ is fixed. Similarly, every $\Delta(A_\alpha)$ is fixed by $\psi$ after possibly acting with $\rho$ and $\bftau$. This argument shows that $\psi$ is generated by $\bftau$ and $\rho$. Notice that conjugation by $\bftau$ induces the inversion on $\langle \rho \rangle$. This shows that the subgroup of $\Out(\calS_0)$ that fixes $\Sigma_\Delta$ as a set is isomorphic to $D_n$, the dihedral group 
of the regular $n$-gon.

From Proposition \ref{prop: garibaldi}, we deduce that $N_{GL(V)}\calS_0\simeq \calS_0 \rtimes K$, where $K$ is a subgroup of $D_n$. However, the elements of $GL(V)$ defined by
\begin{align*}
 \rho: (X_1 \vvirg X_n) &\mapsto (X_2 \vvirg X_n,X_1), \\
 \tau: (X_1 \vvirg X_n) &\mapsto (X_n^T \vvirg X_1^T),
\end{align*}
clearly lie in $N_{GL(V)}\calS_0$ and generate a copy of $D_n$. We conclude that $N_{GL(V)}\calS_0\simeq \calS_0 \rtimes D_n$.

Thus, $\calS = N_{GL(V)}\calS_0$, since the elements $\rho,\tau$ defined above stabilize the polynomial $IMM_q^n$.
\end{proof}

We conclude this section with the proof that the subgroup $\calS_0$ of $\calS$ characterizes the polynomial $IMM_q^n$.

 \begin{prop}\label{prop: IMM characterized by its stabilizer}
  Let $P \in S^n V$ be a polynomial that is stabilized by $\calS_0$. Then $P = c IMM_q^n$ for some $c \in \bbC$.
  \begin{proof}
   Consider the decomposition of $S^nV$ as $\calS_0$-module
   \[
    S^n V = \bigoplus_{a_1 + \cdots + a_n = n} S^{a_1} A_1^*  \ootimes S^{a_n}A_n^* .
   \]
Write $P = \sum_{\vert \bfa \vert = n} P_\bfa$ in accordance with its multidegree. $\calS_0$ acts on each component. Therefore, if $P$ is stabilized by $\calS_0$, then, each $P_\bfa$ is stabilized by $\calS_0$.

We will show that $P_{(1^n)} = c IMM_q^n$ and that $P_\bfa = 0$ if $\bfa \neq (1^n)$.

The first claim follows from the fact that $IMM_q^n$ coincides, as a tensor in $A_1 \ootimes A_n$, with $\id_{U_1} \ootimes \id_{U_n}$, that is the only tensor in $A_1 \ootimes A_n$ that is stabilized by $\calS_0$.

In order to prove the second claim, consider the image via $\Phi : GL(U_1) \ttimes GL(U_n) \to GL(V)$ of $\bbC^* \id_{U_1} \ttimes \bbC^* \id_{U_n}$, that is $\{z_1 \id_{A_1} \ttimes z_n \id_{A_n} : z_1 \cdots z_n =1\}$. If $\bfa \neq (1^q)$, let $\beta_j$ for $j=1 \vvirg k$ be the indices for which $a_{\beta_j} = 0$; any element of the form $z_1 \id_{A_1} \ttimes z_n \id_{A_n}$ with $z_{\beta_1} \cdots z_{\beta_k} \neq 1$ does not stabilize $P_\bfa$ if $P_\bfa \neq 0$. This concludes the proof.
  \end{proof}
 \end{prop}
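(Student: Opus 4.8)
The plan is to exploit the multidegree decomposition
\[
 S^n V = \bigoplus_{a_1 + \dots + a_n = n} S^{a_1}A_1^* \otimes \dots \otimes S^{a_n}A_n^*
\]
together with the fact that $\calS_0 = \Im\Phi$ acts block-diagonally with respect to $V = A_1^* \oplus \dots \oplus A_n^*$ and hence preserves this grading. Writing $P = \sum_{|\bfa| = n} P_\bfa$ accordingly, each $P_\bfa$ is again fixed by $\calS_0$, so it suffices to prove: (i) the only $\calS_0$-fixed vectors, up to scale, in the multidegree-$(1^n)$ component $A_1^* \otimes \dots \otimes A_n^*$ are the scalar multiples of $IMM_q^n$; and (ii) the component $S^{a_1}A_1^* \otimes \dots \otimes S^{a_n}A_n^*$ has no nonzero $\calS_0$-fixed vector when $\bfa \neq (1^n)$.

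For (ii) I would test against the central torus $T := \Phi(\bbC^*\Id_{U_1} \ttimes \bbC^*\Id_{U_n}) \subseteq \calS_0$. Unwinding the definition of $\Phi$, in particular the transpose-inverse identification used inside each $\phi_\alpha$, the element $\Phi(z_1\Id_{U_1}, \dots, z_n\Id_{U_n})$ acts on the block $A_\alpha^*$ by the scalar $z_\alpha^{-1}z_{\alpha+1}$, hence on the whole component $S^{a_1}A_1^* \otimes \dots \otimes S^{a_n}A_n^*$ by the single scalar
\[
 \prod_{\alpha=1}^n (z_\alpha^{-1}z_{\alpha+1})^{a_\alpha} = \prod_{\gamma=1}^n z_\gamma^{\,a_{\gamma-1}-a_\gamma}.
\]
Since $\sum_\gamma (a_{\gamma-1}-a_\gamma) = 0$, this is a well-defined character of $T \simeq (\bbC^*)^n/\bbC^*$, and it is trivial if and only if $a_{\gamma-1} = a_\gamma$ for all $\gamma$, i.e. if and only if $a_1 = \dots = a_n$; because $\sum a_\gamma = n$, this happens exactly when $\bfa = (1^n)$. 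So for $\bfa \neq (1^n)$ the torus $T$ acts on the whole component by a nontrivial scalar character, the component contains no nonzero $T$-fixed vector, and in particular $P_\bfa = 0$.

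For (i) I would reorder the tensor factors,
\[
 A_1^* \otimes \dots \otimes A_n^* = (U_1 \otimes U_2^*) \otimes (U_2 \otimes U_3^*) \otimes \dots \otimes (U_n \otimes U_1^*) \;\cong\; \bigotimes_{\alpha=1}^n (U_\alpha \otimes U_\alpha^*) = \bigotimes_{\alpha=1}^n \End(U_\alpha),
\]
so that $\calS_0$ acts, through $\Phi$, by simultaneous conjugation, the factor $GL(U_\alpha)$ acting by conjugation on $\End(U_\alpha)$. By Schur's lemma $\End(U_\alpha)^{GL(U_\alpha)} = \bbC\cdot\Id_{U_\alpha}$, so the space of $\calS_0$-fixed vectors is the one-dimensional space spanned by $\Id_{U_1} \otimes \dots \otimes \Id_{U_n}$; and the coordinate formula for $IMM_q^n$ shows that $IMM_q^n$, read inside the multidegree-$(1^n)$ component, is precisely a nonzero multiple of this chain of identities. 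Hence $P_{(1^n)} = c\,IMM_q^n$ for some $c \in \bbC$, and combined with (ii) this gives $P = c\,IMM_q^n$.

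The routine part is the representation theory, namely Schur's lemma and the $\calS_0$-stability of the multigrading; the one place demanding care is step (ii) — correctly computing the scalar by which the central torus acts on each block $A_\alpha^*$ out of the definition of $\Phi$, and then verifying that the resulting character of $T$ degenerates to the trivial one precisely for the multidegree $(1^n)$.
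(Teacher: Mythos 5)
Your proof is correct and follows essentially the same route as the paper: decompose $S^nV$ by multidegree, identify the $(1^n)$-component invariant as the chain of identities via Schur's lemma, and kill the other components by the character of the central torus $\Phi(\bbC^*\Id_{U_1}\ttimes\bbC^*\Id_{U_n})$. Your version simply makes explicit the character computation and the rearrangement into $\bigotimes_\alpha \End(U_\alpha)$ that the paper leaves implicit.
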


\section{The dual variety of $\calI mm_q^n$}

We refer to \cite[Ch. 1]{GKZ} for basic facts on dual varieties.

If $f \in S^d W^*$ is a general polynomial of degree $d$ on $W$, then the dual variety $V(f)^\vee$ of the hypersurface $V(f) \subseteq \bbP W$ is itself a hypersurface; we say that $V(f)$ is dual degenerate if $V(f)^\vee$ is not a hypersurface. In particular, dual degeneracy of the variety of singular $n\times n$ matrices (the zero set of the $n\times n$ determinant polynomial) was used in \cite{LaMaRa:Deg_duals} to provide a lower bounds on the determinantal complexity of the permanent polynomial.

The goal of this section is to prove that the dual variety of the projective variety $\calI mm_q^n \subseteq \bbP V^*$ is a hypersurface for every $n \geq 1$ and every $q \geq 2$. The main result that we will need is B. Segre's dimension formula:
\begin{thm}[B. Segre dimension formula, \cite{GKZ}, Ch. 1, Thm. 5.3]
Let $f \in S^d W^*$ be an irreducible polynomial. Let $Y = V(f) \subseteq \bbP W$ be the zero locus of $f$ and let $Y ^\vee \subseteq \bbP W^*$ be the dual variety of $Y$. Then
\[
 \dim Y^\vee = \rk (H_f (y)) - 2
\]
where $H_{f}(y)$ is the Hessian matrix of $f$ evaluated at a general point $y \in Y$.
\end{thm}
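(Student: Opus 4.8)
The plan is to prove B. Segre's formula by analysing the Gauss map of $Y$ at the level of affine cones, combined with Euler's relation. Assume $d=\deg f\geq 2$ (for $d=1$ the hypersurface $Y$ is a hyperplane and $Y^\vee$ a point, so this case is degenerate and excluded). Put $N:=\dim\bbP W=\dim W-1$, so $\dim Y=N-1$. Let $\hat Y\subseteq W$ be the affine cone over $Y$ and $\hat Y_{\mathrm{sm}}$ its smooth locus; since $f$ is irreducible it is reduced, so $\hat Y_{\mathrm{sm}}=\{x:df_x\neq 0\}$ is a dense open subset of $\hat Y$, and $\dim\hat Y=N$.

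First I would record the Gauss-map description of $Y^\vee$. As $Y$ is a hypersurface, for a smooth point $y\in Y$ the only hyperplane of $\bbP W$ tangent to $Y$ at $y$ is $[df_x]\in\bbP W^*$, where $x$ lies over $y$; hence $Y^\vee=\overline{\{[df_x]:x\in\hat Y_{\mathrm{sm}}\}}$. Consider $\hat\gamma:\hat Y_{\mathrm{sm}}\to W^*$, $x\mapsto df_x$. Because $df_{tx}=t^{d-1}df_x$, its image is a cone, so $\widehat{Y^\vee}:=\overline{\hat\gamma(\hat Y_{\mathrm{sm}})}$ is the affine cone over $Y^\vee$ and $\dim Y^\vee=\dim\widehat{Y^\vee}-1$. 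Since $\hat\gamma$ is dominant onto $\widehat{Y^\vee}$, the theorem on the dimension of fibres gives $\dim\widehat{Y^\vee}=\dim\hat Y-e=N-e$, where $e$ is the dimension of a general fibre of $\hat\gamma$.

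Next I would identify $e$ with a rank defect of the Hessian. The differential at $x$ of the ambient map $W\to W^*$, $x\mapsto df_x$, is exactly $H_f(x)$, viewed as the symmetric linear map $W\to W^*$ with matrix $(\partial_i\partial_j f(x))$; restricted to the submanifold $\hat Y_{\mathrm{sm}}$ the differential of $\hat\gamma$ at $x$ is $H_f(x)|_{T_x\hat Y}$. By generic smoothness (we are over $\bbC$), for general $x\in\hat Y_{\mathrm{sm}}$ the fibre of $\hat\gamma$ through $x$ is smooth of dimension $e=\dim\ker\!\big(H_f(x)|_{T_x\hat Y}\big)=\dim\!\big(\ker H_f(x)\cap T_x\hat Y\big)$. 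The step I expect to be the real content is the claim $\ker H_f(x)\subseteq T_x\hat Y$, which makes this intersection all of $\ker H_f(x)$ and is exactly what produces the ``$-2$'' rather than a ``$-1$''. It follows from Euler's identity: differentiating $\sum_i x_i\partial_i f=d f$ gives $\sum_i x_i\partial_i\partial_j f=(d-1)\partial_j f$, i.e. $H_f(x)\,x=(d-1)\,df_x$; hence for $v\in\ker H_f(x)$, using symmetry of $H_f(x)$, $\langle df_x,v\rangle=\tfrac{1}{d-1}\langle H_f(x)x,v\rangle=\tfrac{1}{d-1}\langle x,H_f(x)v\rangle=0$, so $v\in\ker df_x=T_x\hat Y$. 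Therefore $e=\dim\ker H_f(x)=(N+1)-\rk H_f(x)$ for general $x$, and
\[
\dim Y^\vee=(N-e)-1=N-\big((N+1)-\rk H_f(x)\big)-1=\rk H_f(x)-2 ,
\]
the rank being taken at a general point, which is meaningful since rank is lower semicontinuous and $\hat Y$ is irreducible.

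The obstacles here are essentially bookkeeping: one must quote the standard description of $Y^\vee$ via the Gauss map, invoke generic smoothness over $\bbC$, and check that ``$x$ general in $\hat Y_{\mathrm{sm}}$'', ``the fibre of $\hat\gamma$ at $x$ is smooth of minimal dimension'', and ``$\rk H_f(x)$ is maximal'' can be arranged simultaneously — which they can, each being an open condition on the irreducible $\hat Y$. One could instead run the same count on the conormal variety $\calC_Y\subseteq\bbP W\times\bbP W^*$ using biduality, but the Gauss-map formulation above makes the roles of the Hessian and of Euler's relation most transparent.
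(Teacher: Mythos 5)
The paper does not prove this statement: it is quoted verbatim from \cite{GKZ} (Ch.\ 1, Thm.\ 5.3) and used as a black box, so there is no internal proof to compare against. Your argument is a correct, self-contained proof and is the standard one: identify $Y^\vee$ with the closure of the image of the affine Gauss map $x\mapsto df_x$, compute the general fibre dimension as $\dim\ker\bigl(H_f(x)|_{T_x\hat Y}\bigr)$ via generic smoothness, and use the differentiated Euler relation $H_f(x)\,x=(d-1)\,df_x$ together with the symmetry of $H_f(x)$ to show $\ker H_f(x)\subseteq \ker df_x=T_x\hat Y$, which is exactly where the ``$-2$'' comes from. Your exclusion of $d=1$ is also right (and strictly necessary: a linear form is irreducible but the formula fails for it), and the simultaneous-genericity bookkeeping at the end is handled correctly since all the conditions are dense open on the irreducible cone.
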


We will find a point of $\calI mm_q^n$ where the Hessian matrix of $IMM_q^n$ is non-singular. Semicontinuity of matrix rank provides that the Hessian matrix at the general point is non-singular and therefore the dual variety $(\calI mm_q^n)^\vee$ is a hypersurface in $\bbP V$.

The second order partials of $IMM_q^n$ are the following:
\begin{align*}
 &\frac{\partial^2}{\partial(x_\beta)^r_s \partial (x_\alpha)^j_k } IMM^n_q =\\
 &=\left\{ \begin{array}{ll}
 0 & \text{if } \alpha = \beta ;\\
 0 & \text{if } \beta = \alpha-1 \text{ and } k \neq r \\
 0 & \text{if } \beta = \alpha+1 \text{ and } s \neq j;\\
 (X_{\alpha -2} \cdots X_1X_n \cdots X_{\alpha+1} )_j^s & \text{if } \beta = \alpha -1 \text{ and } k = r;\\
 (X_{\alpha -1} \cdots X_1 X_n \cdots X_{\alpha+2} )_r^k & \text{if } \beta = \alpha +1 \text{ and } s = j;\\
 (X_{\alpha - 1} \cdots X_{\beta +1})^k_r (X_{\beta-1} \cdots X_{\alpha +1})^s_j & \text{if } \beta \neq \alpha,\alpha \pm 1. \\
 \end{array}\right.
\end{align*}

We denote by $\nabla_\bullet$ (resp. $\nabla^\bullet$) the row (resp. column) vector of first order differential operators in the variables $(x_\alpha)^j_k$ ordered according to the lexicographical order on $(\alpha,j,k)$ (resp. on $(\alpha,k,j)$). The Hessian matrix of a polynomial $f \in Sym(V)$ is $\nabla^\bullet \nabla_\bullet f$; if $S$ is a subset of the set of variables, we denote by $\nabla_S$ (resp. $\nabla^S$) the sub-vector of $\nabla_\bullet$ (resp. $\nabla^\bullet$) of the differential operators in the variables of $S$; let $\nabla_\alpha = \nabla_{\{(x_\alpha)^i_j \}_{i,j}}$ and similarly $\nabla^\alpha$. The choice of different orderings for the differential operators in $\nabla_\bullet$ and $\nabla^\bullet$ breaks the usual symmetry of the Hessian matrix, but in our case, exploiting the symmetries of $IMM_q^n$ and of the point in $\calI mm^n_q$ that we will choose, gives to the Hessian a block structure that will be extremely convenient for the calculation.

Let $H := \nabla^\bullet \nabla_\bullet IMM^n_q$ be the $nq^2 \times nq^2$ Hessian matrix of $IMM^n_q$. It is an $n\times n$ block matrix consisting of $q^2 \times q^2$ blocks; the $(\alpha,\beta)$-th block is $H^\alpha_\beta = \nabla^{\alpha} \nabla_{\beta}  IMM^n_q$. 

Let $\omega$ be a root of the univariate polynomial $\phi(t) = t^n+(q-1)$ and consider the point $p = (X_\alpha)_{\alpha = 1\vvirg n} \in \bbP V^*$ where $X_\alpha = X = (x^i_j)$, is the diagonal matrix with $x^i_i = 1$ for $i\leq q-1$, $x^q_q = \omega$. We have
\[
IMM^n_q (p) = \trace(X^n) = \underbrace{1 + \cdots + 1}_{q-1} + \omega^n = \phi(\omega) = 0,
\]
so $p \in \calI mm^n_q$. 

We denote by $H^\alpha_\beta(p)$ the blocks of the Hessian matrix at the point $p$. Each block $H^{\alpha}_\beta(p)$ is a diagonal matrix and the symmetries of $IMM^n_q$ and of the point $p$ provide that $H^\alpha_\beta(p) = H^1_{\beta-\alpha+1}(p)$ (where as usual the index is to be read modulo $n$).

We have $H^1_1(p) = 0$ and $H^1_\beta(p)$ is a $q^2 \times q^2$ diagonal matrix that can be partitioned in blocks of size $q \times q$. The off-diagonal blocks are $0$ and we denote by $H^1_\beta(p)^k$ the $k$-th diagonal block that is itself a diagonal matrix. In particular, the $\ell$-th diagonal entry of $H^1_\beta(p)^k$ is \[\frac{\partial^2}{\partial (x_\beta)_k^\ell \partial (x_1)^k_\ell} IMM^n_q (p).\]

We obtain for $\beta \neq 1$
\[
 H^1_\beta(p)^k = \left\{ \begin{array}{cc}
                           \left[\begin{array}{cccc}
                           1& & & \\
                            & \ddots & & \\
                            & & 1 & \\
                            & & & \omega^{\beta-2}
                          \end{array} \right]& \text{if $k\neq q$}, \\
                           \left[\begin{array}{cccc}
                           \omega^{n-\beta}& & & \\
                           & \ddots & & \\
                           & & \omega^{n-\beta} & \\
                           & & & \omega^{n-2}
                          \end{array} \right]& \text{if $k=q$}.\end{array}
\right.
\]

\begin{prop}
 The matrix $H(p)$ is non-singular.
 \begin{proof}
  Let $a_n = \frac{(q-1)^{n-1} + (-1)^n}{q}$ and define a matrix $C$ having the same block structure as $H(p)$, with 
\[
 (C^1_1)^k = \left\{ \begin{array}{cc}
                           \left[\begin{array}{cccc}
                           -\frac{n-2}{n-1}& & & \\
                            & \ddots & & \\
                            & & -\frac{n-2}{n-1} & \\
                            & & & \frac{a_{n-1}}{a_n}\omega
                          \end{array} \right]& \text{if $k\neq q$}, \\
                           \left[\begin{array}{cccc}
                           \frac{a_{n-1}}{a_n}& & & \\
                           & \ddots & & \\
                           & & \frac{a_{n-1}}{a_n} & \\
                           & & & \frac{n-2}{(q-1)(n-1)}\omega^{2}
                          \end{array} \right]& \text{if $k=q$},\end{array}
\right.
\]  
and for $\beta \geq 2$
\[
 (C^1_\beta)^k = \left\{ \begin{array}{cc}
                           \left[\begin{array}{cccc}
                           \frac{1}{n-1}& & & \\
                            & \ddots & & \\
                            & & \frac{1}{n-1} & \\
                            & & & \frac{(-1)^n}{a_n}\omega^{(n-1)(n-\beta)}
                          \end{array} \right]& \text{if $k\neq q$}, \\
                           \left[\begin{array}{cccc}
                           \frac{(-1)^n}{a_n} \omega^{(n-1)(\beta-2)}& & & \\
                           & \ddots & & \\
                           & & \frac{(-1)^n}{a_n} \omega^{(n-1)(\beta-2)} & \\
                           & & & \frac{1}{(q-1)(n-1)}\omega^{2}
                          \end{array} \right]& \text{if $k=q$}.\end{array}
\right. 
\]
The matrix $C$ is the inverse of $H(p)$.

The proof of this fact reduces to the calculation of the product $H(p) \cdot C$. Let $R = H(p) \cdot C$ and let $R^\alpha _\beta$ be the $q^2 \times q^2$ blocks of $R$. For every $\alpha,\beta$, we have
\[
 R^\alpha_\beta = \textsum_\gamma H^\alpha_\gamma(p) \cdot C^\gamma_\beta = \textsum_\gamma H^1_{\gamma-\alpha+1}(p) \cdot C^1_{\beta - \gamma +1}.
\]
As every $q^2 \times q^2$ is diagonal, this calculation reduces to the $q\times q$ diagonal blocks $(R^\alpha_\beta)^k$ of $R^\alpha_\beta$:
\[
 ( R^\alpha_\beta)^k = \textsum_\gamma H^1_{\gamma-\alpha+1}(p)^k \cdot (C^1_{\beta - \gamma + 1})^k.
\]
A straightforward calculation shows that $(R^\alpha_\beta)^k$ is the $q\times q$ identity matrix if $\alpha = \beta$ and $0$ otherwise.
 \end{proof}
\end{prop}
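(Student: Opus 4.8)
The Proposition asserts only that $H(p)$ is invertible, and the matrix $C$ displayed above is handed to us as a candidate inverse, so the plan is to verify $H(p)\cdot C=I_{nq^2}$ directly; since $H(p)$ is square this alone gives $C=H(p)^{-1}$ and finishes. What makes the verification tractable is the structure already exposed. In the $n\times n$ block array, $H(p)$ is \emph{block-circulant}, $H^\alpha_\gamma(p)=H^1_{\gamma-\alpha+1}(p)$, and $C$ is built with the same symmetry; hence, writing $R:=H(p)C$, the block $R^\alpha_\beta=\sum_\gamma H^1_{\gamma-\alpha+1}(p)\,C^1_{\beta-\gamma+1}$ depends only on $\beta-\alpha\bmod n$, so $R$ is governed by the cyclic convolution over $\bbZ/n$ of the block-sequences $(H^1_m(p))_m$ and $(C^1_m)_m$, and the claim is that this convolution has exactly one nonzero term, equal to $I_{q^2}$ in the slot corresponding to $\alpha=\beta$. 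Moreover each $q^2\times q^2$ block is diagonal and, cut into $q$ sub-blocks of size $q$, has vanishing off-diagonal $q\times q$ pieces; this is preserved under the convolution, so it is enough to treat, for each $k$ and each position $\ell$ in $\{1,\dots,q\}$, the scalar sequences obtained from the $\ell$-th diagonal entries of $H^1_m(p)^k$ and $(C^1_m)^k$.

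With that reduction the computation splits into the four cases $k=q$ versus $k\neq q$, crossed with $\ell=q$ versus $\ell\neq q$. When $k\neq q$ and $\ell\neq q$ the relevant $n\times n$ matrix is $J_n-I_n$ (the only exceptional block being $H^1_1(p)=0$), and the displayed entries of $C$ are precisely those of $(J_n-I_n)^{-1}=\frac{1}{n-1}J_n-I_n$, which one reads off from $J_n^2=nJ_n$ (this uses $n\geq2$). In each of the other three cases the entries of $H^1_m(p)^k$ and $(C^1_m)^k$ are powers of $\omega$, so every identity to be checked is a finite geometric sum $\sum_\gamma\omega^{\text{(affine in }\gamma)}$; summing it in closed form and then collapsing every exponent that has wrapped past $n$ by means of the defining relation $\omega^{n}=-(q-1)$ — that is, $\phi(\omega)=0$ — reduces the identity to an elementary equality in $\omega$ and in the constants $a_{n-1},a_n$. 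The final ingredient is the arithmetic identity $q\,a_n=(q-1)^{n-1}+(-1)^n$ and the one-step recursion it yields; these are exactly what force the blocks $R^\alpha_\beta$ with $\beta\neq\alpha$ to vanish and those with $\beta=\alpha$ to equal $I_q$.

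None of these pieces is individually hard; the main obstacle is the bookkeeping that ties them together — keeping the index $\beta-\gamma+1$ reduced mod $n$ correctly throughout, so that each geometric series is taken over its true range and each wrap contributes the right factor of $-(q-1)$ — together with the separate dispatch of small-parameter edge cases (such as $n=2$, where several of the geometric series are empty or have a single term, and parameter values at which the entries of $C$ are defined). Once $H(p)C=I_{nq^2}$ is in hand, $H(p)$ is non-singular, proving the Proposition; by lower semicontinuity of matrix rank this then passes to the Hessian of $IMM^n_q$ at a general point of $\calI mm^n_q$, which is the input required by B.\ Segre's formula for the conclusion that $(\calI mm^n_q)^\vee$ is a hypersurface.
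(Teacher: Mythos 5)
Your proposal follows the paper's proof exactly: the paper likewise takes the displayed matrix $C$ and verifies $H(p)\cdot C=I$ by using the block-circulant structure $H^\alpha_\gamma(p)=H^1_{\gamma-\alpha+1}(p)$ to reduce everything to products of $q\times q$ diagonal blocks, leaving the resulting scalar identities as a ``straightforward calculation.'' Your elaboration of that calculation --- the observation that the $k,\ell\neq q$ entries reproduce $(J_n-I_n)^{-1}=\frac{1}{n-1}J_n-I_n$, and that the remaining cases are finite geometric sums collapsed by $\omega^n=-(q-1)$ together with $q\,a_n=(q-1)^{n-1}+(-1)^n$ --- is precisely the bookkeeping the paper omits, so the two arguments coincide.
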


By B. Segre dimension formula, we conclude:

\begin{thm}\label{thm: dual of calImm is hypersurface}
The dual variety $(\calI mm_q^n)^\vee$ of $\calI mm_q^n$ is a hypersurface in $\bbP V^*$.
\end{thm}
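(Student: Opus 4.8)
The plan is to read the statement off B.~Segre's dimension formula, using the Proposition established just above. For Segre's formula one needs $IMM^n_q$ to be irreducible; this is a known fact (for $n=1$ it is linear, for $n=2$ it is the nondegenerate quadratic form $\trace(X_2X_1)$, and in general it is classical), so I will take it as given. Segre's formula then yields
\[
 \dim(\calI mm_q^n)^\vee = \rk\bigl(H(y)\bigr)-2
\]
for a general point $y\in\calI mm^n_q$, where $H(y)$ denotes the $nq^2\times nq^2$ Hessian matrix of $IMM^n_q$ evaluated at $y$. Therefore it is enough to prove that $H(y)$ has maximal rank $nq^2$ for general $y$: in that case $\dim(\calI mm_q^n)^\vee = nq^2-2 = \dim\bbP V^*-1$, which says precisely that the dual variety is a hypersurface.

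To get maximal rank at a general point I would invoke semicontinuity of the rank of a matrix with regular-function entries: the locus $\{\,y\in\calI mm^n_q:\rk H(y)=nq^2\,\}$ is Zariski open in $\calI mm^n_q$, hence dense as soon as it is nonempty. So it suffices to produce a single point $p\in\calI mm^n_q$ at which the Hessian is nonsingular. This is exactly what the preceding Proposition does: at $p=(X,\dots,X)$ with $X=\diag(1,\dots,1,\omega)$ and $\omega$ a root of $t^n+(q-1)$, the symmetries of $IMM^n_q$ and of $p$ put $H(p)$ into an $n\times n$ array of $q^2\times q^2$ diagonal blocks depending only on $\beta-\alpha\bmod n$, and the explicit matrix $C$ written there is checked to satisfy $H(p)\cdot C=I$. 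Hence $\rk H(p)=nq^2$, and combining this with semicontinuity and Segre's formula completes the argument.

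The substance of the proof is thus entirely in the Proposition, not in the Theorem itself: the work is to find a point $p$ whose stabilizer inside $\calS$ is rich enough --- the cyclic symmetry $\rho$ and the diagonal torus $\Phi(\bbC^*\id_{U_1}\ttimes\bbC^*\id_{U_n})\subseteq\calS_0$ --- to collapse the Hessian to a block-circulant array of diagonal matrices, and then to guess its inverse; verifying $H(p)C=I$ amounts, after that block diagonalization, to inverting a short list of $q\times q$ diagonal matrices built from powers of $\omega$, together with a small $n\times n$ convolution identity that uses the defining relation $\omega^n=-(q-1)$. Granting that computation, the present Theorem is the two-line deduction above. The one subtlety I would flag is the irreducibility hypothesis in Segre's theorem; it is benign here, and in fact the chosen $p$ is a smooth point of $\calI mm^n_q$, since $\nabla IMM^n_q(p)$ is, block by block, the nonzero diagonal matrix $X^{n-1}$.
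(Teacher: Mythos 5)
Your proposal is correct and follows essentially the same route as the paper: B.~Segre's formula reduces the claim to the nonsingularity of the Hessian at a general point, which by semicontinuity of rank follows from its nonsingularity at the explicit point $p=(X,\dots,X)$ with $X=\diag(1,\dots,1,\omega)$, $\omega^n=-(q-1)$, as established in the preceding Proposition via the explicit inverse $C$. Your additional remarks --- flagging the irreducibility hypothesis in Segre's theorem and observing that $p$ is a smooth point since $\nabla IMM^n_q(p)$ is given blockwise by the nonzero matrix $X^{n-1}$ --- are sound and slightly more careful than the paper's own exposition.
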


\section{The singular locus of $IMM^n_q$}

In this section we investigate geometric properties of the singular locus of the hypersurface $\calI mm^n_q$. We refer to \cite{EisHar:3264} for general results on the singular locus of general polynomials. For a vector space $W$, the hypersurface cut out by a general polynomial $g \in S^d W^*$ is a smooth variety in $\bbP W$; the set of polynomials whose zero sets are singular is a hypersurface in $\bbP S^d W^*$, called the discriminantal hypersurface. The general polynomial $g$ of the discriminantal hypersurface cuts out a variety, whose singular locus is an isolated double point (see \cite[Ch. 7]{EisHar:3264}).

In this section, $\calI mm^n_q$ denotes the affine cone in $V^*$ over the projective variety $V(IMM^n_q) \subseteq \bbP V^*$, and $\calS ing^n_q$ denotes the singular locus of $\calI mm^n_q$. 

\begin{obser}\label{obser: equations for sing}
 We have 
 \[
  \frac{\partial}{\partial (x_\alpha)^i_j} IMM^n_q = (X_{\alpha-1} \cdots X_1 X_n\cdots X_{\alpha+1})^j_i \in S^{n-1}V.
 \]
Therefore
\[
 \calS ing^n_q = \{(X_1 \vvirg X_n)\in \bbP V^* : X_{\alpha-1}\cdots X_{\alpha+1} = 0  \ \forall \alpha=1 \vvirg n\}.
\]
\end{obser}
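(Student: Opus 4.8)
The plan is to obtain the formula for the first partials of $IMM^n_q$ by a direct computation from its monomial expansion, and then to read off $\calS ing^n_q$ as the common zero set of those partials.

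First I would start from the explicit expression recorded in the Notation,
$IMM_q^n = \sum_{\ell_1,\dots,\ell_n=1}^q (x_n)^{\ell_1}_{\ell_n}(x_{n-1})^{\ell_n}_{\ell_{n-1}}\cdots (x_1)^{\ell_2}_{\ell_1}$,
and use the fact that $IMM^n_q$ lies in the multidegree $(1,\dots,1)$ component $A_1^*\otimes\cdots\otimes A_n^*$, hence is multilinear: each variable $(x_\alpha)^i_j$ appears to the first power in every monomial in which it occurs. Consequently $\partial/\partial (x_\alpha)^i_j$ acts as coefficient extraction. The factor contributed by the $\alpha$-th slot of a monomial is $(x_\alpha)^{\ell_{\alpha+1}}_{\ell_\alpha}$, so differentiating with respect to $(x_\alpha)^i_j$ retains exactly the monomials with $\ell_{\alpha+1}=i$ and $\ell_\alpha=j$ and deletes that factor. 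The surviving factors form the chain $(x_{\alpha-1})^{j}_{\ell_{\alpha-1}}(x_{\alpha-2})^{\ell_{\alpha-1}}_{\ell_{\alpha-2}}\cdots (x_1)^{\ell_2}_{\ell_1}(x_n)^{\ell_1}_{\ell_n}\cdots (x_{\alpha+1})^{\ell_{\alpha+2}}_{i}$, and summing over the remaining indices $\ell_{\alpha-1},\dots,\ell_1,\ell_n,\dots,\ell_{\alpha+2}$ is precisely the definition of the $(j,i)$-entry of the composite $X_{\alpha-1}\cdots X_1 X_n\cdots X_{\alpha+1}$ (indices read modulo $n$), which is a map $U_{\alpha+1}\to U_\alpha$. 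This yields $\partial IMM^n_q/\partial (x_\alpha)^i_j = (X_{\alpha-1}\cdots X_1 X_n\cdots X_{\alpha+1})^j_i$.

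For the second assertion, recall that $\calS ing^n_q$ is the locus in $\calI mm^n_q$ where all first-order partials of $IMM^n_q$ vanish; by Euler's relation $\sum_{\alpha,i,j}(x_\alpha)^i_j\,\partial IMM^n_q/\partial (x_\alpha)^i_j = n\cdot IMM^n_q$, the simultaneous vanishing of the partials already forces $IMM^n_q=0$, so no separate equation for the hypersurface is needed. By the formula just established, the $q^2$ partials with a fixed $\alpha$ are exactly the entries of the matrix $X_{\alpha-1}\cdots X_{\alpha+1}$, so their vanishing is equivalent to $X_{\alpha-1}\cdots X_{\alpha+1}=0$; letting $\alpha$ range over $1,\dots,n$ gives the stated description.

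There is no genuine obstacle here: the computation is routine. The only points requiring care are the bookkeeping of indices modulo $n$ (so that for $\alpha=1$ the product reads $X_n\cdots X_2$ and for $\alpha=n$ it reads $X_{n-1}\cdots X_1$, wrapping around the cycle) and the consistent use of the raising/lowering-index conventions fixed earlier, so that the transpose appears in the correct place; invoking Euler's identity is the one conceptual step, as it is what lets us avoid carrying $IMM^n_q=0$ as an extra condition.
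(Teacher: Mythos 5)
Your computation is correct and is exactly the intended argument: the paper states this as an Observation without proof precisely because the partial derivative follows immediately from the multilinear monomial expansion, and the index bookkeeping in your chain $(x_{\alpha-1})^{j}_{\ell_{\alpha-1}}\cdots(x_{\alpha+1})^{\ell_{\alpha+2}}_{i}$ matches the paper's conventions. The Euler-relation remark is a correct (and welcome) way to reconcile ``singular points of the hypersurface'' with the paper's working definition of $\calS ing^n_q$ as the common zero locus of the first-order partials.
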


We will prove that $\calS ing^n_q$ is a reducible variety whose irreducible components can be expressed as orbit-closures of certain quiver representations of the equioriented quiver $\tilde{\bfA}_n$, that is
\[\begin{tikzpicture}
	\draw (-1,3) node{${\tilde{\bfA}_n:}$};

	\draw[->] (0, 3) -- (0.85,3);
	\draw[dashed] (1, 3) -- (4,3);
	\draw[->] (4, 3) -- (4.85,3);

	\draw[<-] (0.1,3.15) .. controls (0.5,4) and (4.5,4) .. (4.9,3.15);

	\draw[fill=white] (0,3) circle(.15);
	\draw[fill=white] (1,3) circle(.15);
	\draw[fill=white] (4,3) circle(.15);
	\draw[fill=white] (5,3) circle(.15);

	\draw[yshift=-.3cm] (0,3) node{$~_1$};
	\draw[yshift=-.3cm] (1,3) node{$~_2$};
	\draw[yshift=-.3cm] (4,3) node{$~_{n-1}$};
	\draw[yshift=-.3cm] (5,3) node{$~_{n}$};
\end{tikzpicture}
\]

Moreover, we determine desingularizations of these irreducible components, that will allow us to give a formula for the dimension of the singular locus.

We first recall some basic facts about representation theory of quivers; we refer to \cite{AsSiSk1} for a complete presentation of the subject.

A quiver is a finite directed graph; in our notation a quiver is $\calQ = (Q_0,Q_1)$ where $Q_0$ is the set of the vertices of the graph and $Q_1$ is the set of the arrows, given as ordered pairs of vertices; let $Q_r$ denote the set of paths of length $r$, namely the set of $r$-tuples of consecutive edges. A representation of $\calQ$ (or a $\calQ$-representation) is a pair $M = (\{M_a\}_{a \in Q_0}, \{ \phi_\alpha\}_{\alpha \in Q_1})$ where $M_a$ is a finite dimensional $\bbC$-vector space and, if $\alpha$ is an arrow from $a$ to $b$, then $\phi_\alpha \in \Hom(M_a,M_b)$; a map between two representations $M,N$ of a quiver $\calQ$ is a collection of maps $\{f_a : a \in Q_0\}$, with $f_a : M_a \to N_a$, that are compatible with the maps of $M$ and $N$.

Let $\frakMod_\calQ$ denote the category of finite dimensional representations of $\calQ$; the direct sum of two representations $M,N$ is defined naturally as $(M \oplus N)_a = M_a \oplus N_a$ with the maps defined component wise. A representation $E$ of $\calQ$ is called indecomposable if it cannot be written as a proper direct sum of two representations. $\frakMod_\calQ$ is a Krull-Schmidt category, namely every representation has a unique decomposition into indecomposable representations. The dimension vector of a representation $M$ is the vector $\mathbf{dim}(M) = (\dim M_a)_{a \in Q_0} \in \bbN^{\vert Q_0 \vert}$.

If $w = \alpha_1 \cdots \alpha_k$ is a path on $\calQ$ from $a$ to $b$ and $M$ is a representation of $\calQ$, $\phi_w = \phi_{\alpha_k} \circ \cdots \circ \phi_{\alpha_1}$ denotes the map $M_a \to M_b$ defined by the composition of the maps on $w$. 

For a quiver $\calQ$, let $\bbC \calQ$ be its path algebra. A two-sided ideal $I$ of $\bbC \calQ$ is said to be admissible if $Q_k \subseteq I$ for some $k$ and $I$ is contained in the ideal generated by $Q_2$. A representation $M = (\{M_a\},\{\phi_\alpha\})$ of $\calQ$ bounded by an ideal $I$ is an element of $\frakMod_\calQ$ such that, for every $\omega \in I$ (that is given by a linear combination of paths) the homomorphism $\phi_\omega$ is identically $0$. We denote by $\frakMod_\calQ^I$ the category of representations of $\calQ$ bounded by $I$.

The category $\frakMod^I_\calQ$ is equivalent to the category of finite dimensional representations of the algebra $\bbC \calQ /I$ (see \cite[Ch. III, Thm. 1.6]{AsSiSk1}, for a proof).

For a fixed dimension vector $\bfd = (d_a)_{a \in Q_0}$, a representation of $\calQ$ is given, up to isomorphism, by a collection of maps $\phi_\alpha : \bbC^{d_a} \to \bbC^{d_b}$, for any $\alpha$ arrow from $a$ to $b$. More precisely, if $\frakMod_\calQ(\bfd)$ denotes the set of representations with dimension vector $\bfd$, we have
\[
\frakMod_\calQ(\bfd) = \bigoplus_{\substack{\alpha \in Q_1 \\ \alpha: a\to b}} \Hom(\bbC^{d_a} , \bbC^{d_b});
\]
this is called the representation variety of $\calQ$. If $I$ is an admissible ideal of $\bbC \calQ$, then $\frakMod_\calQ^I(\bfd)$ is an algebraic subvariety of $\frakMod_\calQ(\bfd)$.

The group $G(\calQ,\bfd) := \prod_{a \in Q_0} GL(\bbC^{d_a})$ acts on $\frakMod_\calQ(\bfd)$ by change of basis. Two representations $M,N \in \frakMod_\calQ(\bfd)$ are in the same orbit under the action of $G(\calQ)$ if and only if they have the same decomposition into indecomposables if and only if they are isomorphic as $\calQ$-representations. Given $M \in \frakMod_\calQ(\bfd)$, we denote by $\scrO_M$ its orbit under the action of $G(\calQ,\bfd)$. Given two elements $M,N \in \frakMod_\calQ(\bfd)$, we say that $N$ is a degeneration of $M$ if $N \in \bar{\scrO}_M$ (where the closure is equivalently Zariski or euclidean): in this case we write $N \leq_{deg} M$. The relation $\leq_{deg}$ defines an order relation on the set of isomorphism classes of $\calQ$-representations with representation vector $\bfd$, or equivalently on the set of the orbits of the action of $G(\calQ,\bfd)$. Notice that if $N \leq_{deg} M$, then $\bar{\scrO}_N$ is a closed subvariety of $\bar{\scrO}_M$.

A different order relation on the set of $\calQ$-representations can be given as follows: if $M = (\{M_a\},\{\phi_\alpha\})$ and $N =(\{N_a\},\{\psi_\alpha\})$, we say that $N \leq_{rk} M$ if, for every path $w$ in $\calQ$, $\rk \phi_w \leq \rk \psi_w$. Prop. 2.1 in \cite{Ried:deg_quivers_with_relations} proves that $N \leq_{deg} M$ implies $N \leq_{rk} M$. Moreover, Corollary of Theorem 1 in \cite{Zwa:Deg_mods_rep_fin_algebras} proves that if $\bbC \calQ / I$ is of finite representation type (namely it has only finitely many indecomposable representations, up to isomorphism) then $\leq_{deg}$ and $\leq_{rk}$ are equivalent.

We say that $M$ is $k$-nilpotent if, for every path $w$ of length greater or equal to $k$, we have $\phi_w = 0$. Equivalently a $k$-nilpotent representation is a representation bounded by the ideal generated by $Q_k$. We say that a representation $M$ is nilpotent if it is $k$-nilpotent for some $k$.

The set of $k$-nilpotent representations with a fixed dimension vector $\bfd$ is an algebraic variety in $\frakMod_\calQ(\bfd)$ that we denote by $\frakMod^{[k]}_\calQ(\bfd)$. The union of all such varieties is denoted by $\frakMod^{[nil]}_\calQ(\bfd)$ and it is the set of all nilpotent representations of dimension vector $\bfd$. Similarly, we will use the notation $\frakMod^{[k]}_\calQ$ for the category of $k$-nilpotent representations of $\calQ$ and $\frakMod^{[nil]}_\calQ$ for the category of all nilpotent representations.

Write $(q^n) := (\underbrace{q\vvirg q}_{n \text{ times}})$. Then $\frakMod_{\tilde{\bfA}_n}((q^n)) \simeq V^* $, directly from the definition of representation variety. In order to determine the irreducible components of $\calS ing^n_q$, we will study the orbits and the orbit closures of the action of $G(\tilde{\bfA}_n,(q^n))$ on $\frakMod^{[n-1]}_{\tilde{\bfA}_n}(q^n)$. First, we report the classification of the indecomposable $(n-1)$-nilpotent representations of $\tilde{\bfA}_n$:

\begin{thm}[see e.g. \cite{Schif:typeAflags}]\label{thm: classification of n-1 nilp indecomp}
 The $(n-1)$-nilpotent indecomposable representations for the quiver $\tilde{\bfA}_n$ are in one-to-one correspondence with pairs $(\alpha,\beta)$ with $\alpha,\beta \in \bbZ_n$, $\alpha \neq \beta +1$. We denote by $E_{\alpha\beta}$ the indecomposable representation corresponding to the pair $(\alpha,\beta)$. We have
 \[
(E_{\alpha\beta})_\gamma = \left\{\begin{array}{ll} 
                            \bbC &  \text{if $\gamma \in [\alpha,\beta]$},\\
                            0 & \text{otherwise,}  
                           \end{array}\right.
\]
 with the maps given by $id_\bbC : (E_{\alpha\beta})_\gamma \to (E_{\alpha\beta})_{\gamma+1}$ if $\gamma,\gamma+1 \in [\alpha,\beta]$ and $0$ otherwise.
 
 In particular $\bbC \tilde{\bfA}_n / \langle Q_{n-1}\rangle$ is of finite-representation type.
\end{thm}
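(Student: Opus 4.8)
The strategy I would follow is to recognize the bound quiver algebra $A := \bbC\tilde{\bfA}_n/\langle Q_{n-1}\rangle$ as a \emph{Nakayama algebra} and to read off its indecomposable modules from the structure theory of such algebras. Since, as recalled above, the category of $(n-1)$-nilpotent representations of $\tilde{\bfA}_n$ is equivalent to the category of finite-dimensional $A$-modules, it suffices to classify the latter. Note that $A$ is basic, connected, and finite-dimensional, with a complete set of primitive orthogonal idempotents $e_1 \vvirg e_n$ given by the vertices of $\tilde{\bfA}_n$.

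First I would describe the indecomposable projective $A$-module $P_\alpha := A e_\alpha$ at a vertex $\alpha$. In $\tilde{\bfA}_n$ there is exactly one path of each length between any two vertices, and all paths of length $\geq n-1$ lie in $\langle Q_{n-1}\rangle$; hence $P_\alpha$ has a basis consisting of the paths of length $0 \vvirg n-2$ starting at $\alpha$. As a representation of $\tilde{\bfA}_n$ this says that $(P_\alpha)_\gamma = \bbC$ for $\gamma$ on the cyclic arc $[\alpha,\alpha-2]$, that $(P_\alpha)_{\alpha-1}=0$, and that every structure map between consecutive nonzero spaces is the identity; in the notation of the statement, $P_\alpha = E_{\alpha,\alpha-2}$. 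Moreover $P_\alpha$ is \emph{uniserial}: its radical powers $\rad^k P_\alpha$ are spanned by the paths of length $\geq k$ out of $\alpha$, so $\rad^k P_\alpha = E_{\alpha+k,\alpha-2}$ for $0\leq k\leq n-1$, with composition factors $S_\alpha, S_{\alpha+1} \vvirg S_{\alpha-2}$ read from $P_\alpha/\rad P_\alpha$ downward. Running the same computation on the opposite quiver (again a copy of $\tilde{\bfA}_n$ with reversed arrows, bounded by the analogous relation) shows the indecomposable injective $A$-modules are uniserial too. Therefore $A$ is a Nakayama algebra (in fact self-injective, though we will not need this).

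I would then invoke the classification of indecomposables over Nakayama algebras (see e.g. \cite[Ch.~V]{AsSiSk1}, or \cite{Schif:typeAflags} directly): every indecomposable $A$-module is uniserial, hence a quotient $P_\alpha/\rad^j P_\alpha$ of an indecomposable projective for a uniquely determined vertex $\alpha$ and a uniquely determined $j$ with $1\leq j\leq \ell(P_\alpha)=n-1$. By the description above, $P_\alpha/\rad^j P_\alpha$ is the representation supported on the cyclic arc $[\alpha,\alpha+j-1]$ with identity maps; writing $\beta:=\alpha+j-1 \in \bbZ_n$, the range $1\leq j\leq n-1$ corresponds precisely to $\beta \in \bbZ_n\setminus\{\alpha-1\}$, that is $\alpha\neq\beta+1$. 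Hence the indecomposables are exactly the $E_{\alpha\beta}$ with $\alpha\neq\beta+1$. Distinct admissible pairs give non-isomorphic modules — two distinct cyclic arcs of the same length $j\in\{1\vvirg n-1\}$ are distinct subsets of $\bbZ_n$, so the corresponding representations have distinct dimension vectors — and there are $n(n-1)$ of them, so $A$ has finite representation type.

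The step that needs real care, and where essentially all the work lies if one insists on a self-contained proof, is the Nakayama structure theorem used in the last paragraph: knowing that projectives and injectives are uniserial, one must deduce that \emph{every} indecomposable is uniserial. This can be done by induction on the Loewy length — decompose $\rad M$ into uniserial summands, compare with the projective cover of $M$, and exploit that $M/\rad^2 M$ is a quotient of a projective whose radical layers are simple in order to force $M/\rad M$ to be simple — or, alternatively, by a covering-theoretic argument: lift an $(n-1)$-nilpotent representation of $\tilde{\bfA}_n$ to a representation of the equioriented quiver of type $A_\infty^\infty$ bounded by the length-$(n-1)$ relation, where the finite-dimensional indecomposables are the interval modules, and push down along the Galois $\bbZ$-covering $\bbZ \to \bbZ_n$; there the nontrivial input is the density of the push-down functor on indecomposables. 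Everything else is bookkeeping with cyclic arcs modulo $n$ and the single excluded case $\alpha=\beta+1$.
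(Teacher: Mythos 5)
The paper does not actually prove this statement — it is quoted from the literature via the citation to \cite{Schif:typeAflags} — so there is no internal argument to compare against; what you give is a genuine proof, and it is correct. Your route, identifying $\bbC\tilde{\bfA}_n/\langle Q_{n-1}\rangle$ as a (self-injective) Nakayama algebra and listing its indecomposables as the quotients $P_\alpha/\rad^j P_\alpha$ of the uniserial projectives, is the standard textbook argument: the computation $P_\alpha = E_{\alpha,\alpha-2}$, the translation of $\beta=\alpha+j-1$ with $1\leq j\leq n-1$ into the excluded case $\alpha=\beta+1$, the separation of non-isomorphic modules by dimension vectors, and the count $n(n-1)$ are all right. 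You are also right to isolate the Nakayama structure theorem (projectives and injectives uniserial $\Rightarrow$ every indecomposable uniserial) as the only nontrivial input; since this is proved in \cite[Ch.~V]{AsSiSk1}, a reference the paper already leans on for quiver generalities, citing it is entirely acceptable, and your sketch of the Loewy-length induction is the correct way to make it self-contained. Two cosmetic points: the formula $\rad^k P_\alpha = E_{\alpha+k,\alpha-2}$ is valid for $0\leq k\leq n-2$ only, since $\rad^{n-1}P_\alpha=0$ while the label $E_{\alpha-1,\alpha-2}$ would denote the excluded full-cycle representation rather than the zero module; and whether $Ae_\alpha$ is spanned by paths starting or ending at $\alpha$ depends on the path-composition convention, but this only relabels the interval modules and does not affect the classification.
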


Following \cite{AbDf:Deg_equioriented_Am} and \cite{AbDf:Deg_Am}, define another order relation on $\frakMod^{[n-1]}_{\tilde{\bfA}_n}(q^n)$: if $\alpha_1 \vvirg \alpha_k$ are (indices of) vertices of $\tilde{\bfA}_n$, we write $[\alpha_1 \vvirg \alpha_m]$ to indicate that the vertices lie in this order on $\tilde{\bfA}_n$ (but they are not necessarily consecutive) traveling on a single cycle on $\tilde{\bfA}_n$ starting from $\alpha_1$; we will explicitly state if some of them are allowed to be equal. We say that $M \leq_{ad} N$ if there is a finite sequence $M = M_0 \vvirg M_u = N \in \frakMod^{[n-1]}_{\tilde{\bfA}_n}(q^n)$ such that $M_{j+1}$ can be obtained from $M_j$ by one of the following elementary operations:

\begin{itemize}
 \item[$\cdot$] \emph{gluing}: for vertices $[\alpha, \beta, \gamma]$ (with $\beta$ possibly equal to $\alpha$), substitute $E_{\alpha,\beta} \oplus E_{\beta+1,\gamma}$ with $E_{\alpha,\gamma}$;
 \item[$\cdot$] \emph{shift}: for vertices $[\alpha, \gamma , \beta , \delta]$, substitute $E_{\alpha,\beta} \oplus E_{\gamma,\delta}$ with $E_{\alpha,\delta} \oplus E_{\gamma,\beta}$.
\end{itemize}

The same argument used in Theorem 3.2 of \cite{AbDf:Deg_equioriented_Am} shows that the order $\leq_{ad}$ is equivalent to the orders $\leq_{rk}$ and $\leq_{deg}$. Therefore, Observation \ref{obser: equations for sing}, the results of \cite{Ried:deg_quivers_with_relations} and \cite{Zwa:Deg_mods_rep_fin_algebras} and Theorem \ref{thm: classification of n-1 nilp indecomp} provide the following result:

\begin{thm}\label{thm: sing compt iff rk-maximal}
 The singular locus $\calS ing^n_q$ is $\frakMod^{[n-1]}_{\tilde{\bfA}_n}(q^n)$, regarded as an algebraic subvariety of $\frakMod_{\tilde{\bfA}_n}(q^n) \simeq V^*$. 
 
  The irreducible components of $\calS ing ^n_q$ are in one-to-one correspondence with the set of $\leq_{rk}$-maximal (and equivalently $\leq_{ad}$-maximal, and equivalently $\leq_{deg}$-maximal) elements in the set of $(n-1)$-nilpotent representations of $\tilde{\bfA}_n$, up to isomorphism.
  
 More precisely, if $M$ is a maximal element, then $\bar{\scrO}_M$ is an irreducible component of $\calS ing ^n_q$, all irreducible components arise in this way and if $M,N$ are not isomorphic, then $\bar{\scrO}_M \neq \bar{\scrO}_N$.
\end{thm}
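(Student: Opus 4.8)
The plan is to assemble the statement from the ingredients already recalled, in three moves. The first move is purely bookkeeping: identify $\calS ing^n_q$ with $\frakMod^{[n-1]}_{\tilde{\bfA}_n}((q^n))$. By Observation~\ref{obser: equations for sing} a point $(X_1\vvirg X_n)$ of $V^*\simeq\frakMod_{\tilde{\bfA}_n}((q^n))$ lies in $\calS ing^n_q$ exactly when $X_{\alpha-1}\cdots X_{\alpha+1}=0$ for every $\alpha$. Under the identification $\phi_\alpha=X_\alpha$, the product $X_{\alpha-1}\cdots X_{\alpha+1}$ is the map $\phi_w$ attached to the unique path $w$ of length $n-1$ starting at vertex $\alpha+1$; as $\alpha$ ranges over the vertices these exhaust all paths of length $n-1$, and every path of length $\ell>n-1$ factors through one of them, so the vanishing conditions are precisely $(n-1)$-nilpotency. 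Moreover the entries of the $\phi_w$'s for $\vert w\vert=n-1$ are exactly the first-order partials of $IMM^n_q$, so the two subvarieties are cut out by the same equations, not merely equal as sets.

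The second move reduces the description of irreducible components to a statement about the order $\leq_{deg}$. By Theorem~\ref{thm: classification of n-1 nilp indecomp} the algebra $\bbC\tilde{\bfA}_n/\langle Q_{n-1}\rangle$ is of finite representation type, hence (multiplicities of the $E_{\alpha\beta}$ being bounded by $q$) there are finitely many isomorphism classes of $(n-1)$-nilpotent representations with dimension vector $(q^n)$; since $G(\tilde{\bfA}_n,(q^n))$-orbits coincide with isomorphism classes, $\frakMod^{[n-1]}_{\tilde{\bfA}_n}((q^n))$ is a finite union $\bigcup_M\bar{\scrO}_M$ of orbit closures, each $\bar{\scrO}_M$ irreducible as the closure of the image of the irreducible group $G(\tilde{\bfA}_n,(q^n))$. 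I would then use the elementary fact that the irreducible components of a finite union of irreducible closed sets are its inclusion-maximal members, together with $\bar{\scrO}_N\subseteq\bar{\scrO}_M\iff N\leq_{deg}M$, to conclude that the components of $\calS ing^n_q$ are precisely the $\bar{\scrO}_M$ with $M$ maximal for $\leq_{deg}$. Injectivity of $M\mapsto\bar{\scrO}_M$ on isomorphism classes follows because an orbit is open and dense in its closure, so $\bar{\scrO}_M=\bar{\scrO}_N$ forces $\scrO_M=\scrO_N$ and hence $M\cong N$; and if $M,N$ are non-isomorphic maximal elements then $\bar{\scrO}_M\neq\bar{\scrO}_N$, so we really get a bijection with the set of irreducible components.

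The third move is to replace "$\leq_{deg}$-maximal" by "$\leq_{rk}$-maximal" and "$\leq_{ad}$-maximal": $\leq_{deg}$ refines $\leq_{rk}$ by Prop.~2.1 of \cite{Ried:deg_quivers_with_relations}, the two orders coincide by the Corollary to Theorem~1 of \cite{Zwa:Deg_mods_rep_fin_algebras} in the finite-representation-type case, and $\leq_{rk}=\leq_{ad}$ by the Abeasis--Del Fra argument recalled before the statement; hence all three orders agree, in particular they have the same maximal elements. The only genuinely substantial input is this last chain of equivalences---above all the identification of the combinatorial order $\leq_{ad}$ with $\leq_{rk}$, which rests on transporting Abeasis--Del Fra's Theorem~3.2 to the cyclic quiver $\tilde{\bfA}_n$---but since the excerpt supplies it, the rest of the proof is the bookkeeping and the standard topology described above.
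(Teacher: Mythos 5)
Your proposal is correct and follows the same route as the paper, which itself gives no separate proof but simply assembles Observation~\ref{obser: equations for sing}, the finite representation type from Theorem~\ref{thm: classification of n-1 nilp indecomp}, and the Riedtmann--Zwara--Abeasis--Del Fra equivalences of the three orders. Your write-up merely fills in the standard bookkeeping (paths of length $n-1$ versus $(n-1)$-nilpotency, finitely many orbits, components of a finite union of irreducible closed sets are the inclusion-maximal ones) that the paper leaves implicit.
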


Our next goal is to determine a formula for the dimension of each irreducible component of $\calS ing^n_q$.

For $M \in \frakMod^{[n-1]}_{\tilde{\bfA}_n}(q^n)$, define an integer matrix $\mathbf{Rk}(M)$ encoding the ranks of the maps in $M$ on the paths of $\tilde{\bfA}_n$. If $M = (\{M_\alpha\},\{\phi_\alpha\})$, define $(\mathbf{Rk}(M))_{\alpha\beta} = r_{\alpha\beta}$ where $r_{\alpha\beta} := \rk ( \phi_{\beta-1} \circ \cdots \circ \phi_\alpha : U_\alpha \to U_\beta )$; clearly $r_{\alpha+1,\alpha} = 0$ and we set $r_{\alpha,\alpha} = q$. The combinatorial relations to which the $r_{\alpha\beta}$'s are subject are not trivial.

 Let $\calZ^n_q(M) := \bar{\scrO}_M$ be the irreducible component of $\calS ing^n_q$ corresponding to the $\leq_{rk}$-maximal element $M$. We will give a formula for $\dim \calZ^n_q(M)$ in terms of the entries of $\mathbf{Rk}(M)$. The strategy to compute such formula is standard: we realize a desingularization of $\calZ^n_q(M)$ as a vector bundle $E \to F$ over a product of flag varieties so that we have $\dim \calZ^n_q(M) = \dim E = \dim F + \dim E_p$, where $E_p$ is the fiber of the bundle $E$ over a point $p \in F$.

For non-negative integers $m$ and $0 = k_0 \leq \cdots \leq k_{\ell+1} = m$, denote by $Flag_{k_0 \vvirg k_{\ell+1}}$ the flag variety of nested sequences of subspaces of dimension $k_1 \vvirg k_\ell$ in $\bbC^m$, namely
\[
 Flag_{k_0 \vvirg k_{\ell+1}} := \{ \bfF = (F_0 \vvirg F_{\ell+1}) : 0 = F_0 \subseteq F_1 \subseteq \cdots \subseteq  F_{\ell+1} = \bbC^m, \dim F_j = k_j \}.
\]
The dimension of a flag variety is given by the following formula (see e.g. \cite{Bri:Flags}):
\[
 \dim Flag_{k_0 \vvirg k_{\ell+1}} =  \sum_{j=1}^{\ell+1} (k_j - k_{j-1})k_{j-1}.
\]

Given a $\leq_{rk}$-maximal representation $M \in \frakMod^{[n-1]}_{\tilde{\bfA}_n}(q^n)$ and its corresponding matrix of ranks $\mathbf{Rk}(M)$, define
 \begin{align*}
 \calF_\alpha &:= Flag_{r_{\alpha+1,\alpha},r_{\alpha+2,\alpha} \vvirg r_{\alpha-1,\alpha},r_{\alpha,\alpha}} = \\
 &= \{\bfF_\alpha = (W^\alpha_{1} \vvirg W^\alpha_n) : W^\alpha_\gamma \subseteq W^\alpha_{\gamma +1}, \dim W^\alpha_\gamma = r_{\alpha+\gamma,\alpha}\}.
\end{align*}
In particular, for any $\bfF_\alpha \in \calF_\alpha$, we have $W^\alpha_1 = 0$ and $W^\alpha_n = \bbC^q$. Let $\calF_M := \calF_1 \ttimes \calF_n$ and define
\[
 \tilde{\calZ^n_q(M)} := \{((\bfF_\alpha)_\alpha , X) \in \calF_M \times V^* : X_\alpha W^\alpha_\gamma \subseteq W^{\alpha + 1}_{\gamma -1}, \ \forall \alpha =1\vvirg n, \forall \gamma =2 \vvirg n\}.
\]

\begin{thm}\label{thm: desingularization of irred compts of Sing}
 $ \tilde{\calZ^n_q(M)}$ is a vector bundle over $\calF_M$ and its projection to $V^* \simeq \frakMod_{\tilde{\bfA}_n}(q^n)$ defines a desingularization of $\calZ^n_q(M)$.
Moreover the dimension of the irreducible component $\calZ^n_q(M)$ is 
\[
\dim \calZ^n_q(M) = \sum_{\alpha = 1}^n \sum_{\beta=2}^n (r_{\alpha + \beta, \alpha} - r_{\alpha+\beta-1,\alpha})(r_{\alpha+\beta-1,\alpha} + r_{\alpha+\beta,\alpha+1}).
\]

\end{thm}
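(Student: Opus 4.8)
The plan is to exhibit $\widetilde{\calZ^n_q(M)}$ as a vector subbundle of the trivial bundle $\calF_M \times V^*$ over $\calF_M$, to identify its image under the second projection $\pi$ with the orbit closure $\bar{\scrO}_M = \calZ^n_q(M)$, and to check that $\pi$ is generically one-to-one; the dimension formula then drops out of the bundle structure together with the flag‑variety dimension formula already recalled.

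\textbf{Vector bundle structure.} Over a fixed point $((\bfF_\alpha)_\alpha) \in \calF_M$ the fibre of $\widetilde{\calZ^n_q(M)}$ is $\{X = (X_\alpha) \in V^* : X_\alpha W^\alpha_\gamma \subseteq W^{\alpha+1}_{\gamma-1}\ \forall\alpha,\ \forall \gamma = 2 \vvirg n\}$, a linear subspace of $V^* = \bigoplus_\alpha \Hom(U_\alpha,U_{\alpha+1})$. The group $G(\tilde{\bfA}_n,(q^n)) = \prod_\alpha GL(U_\alpha)$ acts transitively on $\calF_M = \calF_1 \ttimes \calF_n$ — each factor acting transitively on the partial flag variety $\calF_\alpha$ — and this action lifts to $\calF_M \times V^*$ preserving $\widetilde{\calZ^n_q(M)}$; hence the fibres over any two points of $\calF_M$ are isomorphic, so $\widetilde{\calZ^n_q(M)} \to \calF_M$ is a vector bundle. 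In particular $\widetilde{\calZ^n_q(M)}$ is smooth and irreducible (a bundle over the smooth irreducible projective $\calF_M$), and it is closed in $\calF_M \times V^*$, so $\pi$ is proper.

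\textbf{The two dimensions.} The stated flag‑variety formula applied to $\calF_\alpha = Flag_{r_{\alpha+1,\alpha} \vvirg r_{\alpha-1,\alpha},r_{\alpha,\alpha}}$ gives $\dim\calF_\alpha = \sum_{\beta=2}^n (r_{\alpha+\beta,\alpha}-r_{\alpha+\beta-1,\alpha})\,r_{\alpha+\beta-1,\alpha}$, so $\dim\calF_M = \sum_{\alpha=1}^n\sum_{\beta=2}^n (r_{\alpha+\beta,\alpha}-r_{\alpha+\beta-1,\alpha})\,r_{\alpha+\beta-1,\alpha}$. For the fibre, fix $\alpha$ and choose bases of $U_\alpha$, $U_{\alpha+1}$ adapted to $\bfF_\alpha$, $\bfF_{\alpha+1}$: a basis vector of $U_\alpha$ lying in the $\beta$-th step of $\bfF_\alpha$ but not the $(\beta-1)$-st (there are $r_{\alpha+\beta,\alpha}-r_{\alpha+\beta-1,\alpha}$ of these) must be sent by $X_\alpha$ into $W^{\alpha+1}_{\beta-1}$, which has dimension $r_{\alpha+\beta,\alpha+1}$, leaving that many free coordinates; summing, the fibre has dimension $\sum_{\alpha=1}^n\sum_{\beta=2}^n (r_{\alpha+\beta,\alpha}-r_{\alpha+\beta-1,\alpha})\,r_{\alpha+\beta,\alpha+1}$. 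Adding the two and relabelling gives exactly the formula in the statement; what is left is to see this equals $\dim\calZ^n_q(M)$, i.e. that $\pi$ is birational onto $\calZ^n_q(M)$.

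\textbf{Image and birationality.} If $(\bfF,X) \in \widetilde{\calZ^n_q(M)}$, then starting from $W^\alpha_n = U_\alpha$ and iterating $X_\alpha W^\alpha_\gamma \subseteq W^{\alpha+1}_{\gamma-1}$ one sees that the composition along any length-$k$ path of $\tilde{\bfA}_n$ ending at a vertex $\beta$ maps $U_{\beta-k}$ into $W^\beta_{n-k}$, of dimension $r_{\beta-k,\beta}$; in particular all length-$(n-1)$ compositions vanish (so $X$ is $(n-1)$-nilpotent) and $\rk\phi^X_w \le \rk\phi^M_w$ for every path $w$, i.e. $X \leq_{rk} M$. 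Since $\bbC\tilde{\bfA}_n/\langle Q_{n-1}\rangle$ has finite representation type (Theorem \ref{thm: classification of n-1 nilp indecomp}), the cited results of Riedtmann and Zwara give $X \in \bar{\scrO}_M$. Conversely, for $M$ itself set $W^\alpha_\gamma := \Im\phi^M_w$ with $w$ the length-$(n-\gamma)$ path into $\alpha$: these have the dimensions prescribed by $\mathbf{Rk}(M)$, form increasing flags $\bfF_\alpha\in\calF_\alpha$, and satisfy $\phi^M_\alpha W^\alpha_\gamma = W^{\alpha+1}_{\gamma-1}$, so $M\in\Im\pi$; being closed and $G(\tilde{\bfA}_n,(q^n))$-stable, $\Im\pi$ then contains $\bar{\scrO}_M$, whence $\Im\pi = \bar{\scrO}_M = \calZ^n_q(M)$. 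Finally, over any $X$ whose rank matrix equals $\mathbf{Rk}(M)$ — in particular over the dense orbit $\scrO_M$ — the same iteration forces $W^\alpha_\gamma = \Im\phi^X_w$ (containment plus equality of dimensions), so $\pi^{-1}(X)$ is a single point. Thus $\pi$ is a proper birational morphism from a smooth variety onto $\calZ^n_q(M)$, i.e. a desingularization, and $\dim\calZ^n_q(M) = \dim\widetilde{\calZ^n_q(M)}$ equals the computed sum.

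\textbf{Main difficulty.} The hard part is not any single estimate but the bookkeeping of cyclic indices: each arrow of $\tilde{\bfA}_n$ lowers the flag index by one while raising the vertex index by one, and this shift must be lined up with the convention $\dim W^\alpha_\gamma = r_{\alpha+\gamma,\alpha}$ and with the mod-$n$ identities $r_{\alpha+1,\alpha}=0$, $r_{\alpha,\alpha}=q$. The only non‑formal input is the identification $\Im\pi = \bar{\scrO}_M$, which rests on the equivalence $\leq_{rk}\ \Leftrightarrow\ \leq_{deg}$ for $(n-1)$-nilpotent representations of $\tilde{\bfA}_n$.
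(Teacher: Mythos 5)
Your proposal is correct and follows essentially the same route as the paper: the paper defers the vector-bundle/desingularization statement to Lemma 6.2 of the cited work of Abeasis--Del Fra--Kraft and then adds the fibre and flag-variety dimensions, which is exactly the homogeneous-bundle collapsing argument you carry out in full. Your write-up simply makes explicit the details the paper outsources (transitivity of $G(\tilde{\bfA}_n,(q^n))$ on $\calF_M$, the identification $\Im\pi=\bar{\scrO}_M$ via the $\leq_{rk}\Leftrightarrow\leq_{deg}$ equivalence, and generic injectivity), and the index bookkeeping in your dimension count matches the stated formula.
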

\begin{proof}
The proof of the first part of the theorem is essentially the same as Lemma 6.2 in \cite{AbDfKr:Geo_Am}.

The second part of the Theorem is obtained by computing the dimension of the vector bundle $\pi: \tilde{\calZ^n_q(M)} \to \calF_M$ as
\[
 \dim \tilde{\calZ^n_q(M)} = \dim \calF_M + \dim \pi^{-1}(p).
\]
\end{proof}

\textsc{Examples}

We conclude this section showing some explicit calculation of the decomposition of $\calS ing^n_q$ into its irreducible components for small values of $n,q$. We consider the cases $n\leq 3$ (for any $q$) and the case $q=2$ (for any $n$).

For these cases, we determine the $\leq_{deg}$-maximal $(n-1)$-nilpotent representations of $\tilde{\bfA}_n$ of dimension vector $(q^n)$; moreover, for each maximal $M$ we determine an element of $V^* \simeq \frakMod_{\tilde{\bfA}_n}(q^n) \simeq (Mat_q)^{\oplus n}$ whose orbit-closure under the action of $G(\tilde{\bfA}_n,(q^n)) \simeq GL_q^{\times n}$ is the corresponding irreducible component $\calZ^n_q(M)$ of $\calS ing^n_q$ as well as the dimension of $\calZ^q_n(M)$.

We will use the notation $E_{\alpha\beta}$ as in Theorem \ref{thm: classification of n-1 nilp indecomp} for the $(n-1)$-nilpotent indecomposable representations of $\tilde{\bfA}_n$.

\subsection*{Cases $\mathbf{n=1,2}$} These cases are trivial. 

If $n=1$, then $IMM^n_q$ is linear, and therefore $\calI mm ^1_q$ is just the hyperplane of traceless elements in $V^*\simeq U_1^* \otimes U_1$, that is smooth.

If $n=2$, then $IMM^2_q$ is a quadric of maximal rank, and therefore its only singular point is the origin of $V^*$. Therefore $\calS ing^2_q$ is the origin of $V^*$ for every $q$. This corresponds to the representation $\mathbf{0}$ of $\tilde{\bfA}_2$ having dimension vector $(q,q)$ and decomposition into indecomposables $\mathbf{0} = E_{11}^{\oplus q} \oplus E_{22}^{\oplus q}$. The corresponding matrix of ranks is the $2 \times 2$ identity matrix, and the formula for the dimension in Theorem \ref{thm: desingularization of irred compts of Sing} gives $0$.

\subsection*{Case $\mathbf{n=3}$} There are six $2$-nilpotent indecomposable representations for $\tilde{\bfA}_3$. With the notation of Theorem \ref{thm: classification of n-1 nilp indecomp} they are: $E_{11},E_{22},E_{33},E_{12},E_{23},E_{31}$. Notice that no shifting operations may be performed among them. Moreover, no gluing operation may be performed between any of $E_{12},E_{23},E_{31}$ and any other indecomposable: in particular if $N$ is a $\leq_{deg}$-maximal $2$-nilpotent representation of $\tilde{\bfA}_3$ with dimension vector $(d_1,d_2,d_3)$, then $N \oplus E_{12}$ is maximal with dimension vector $(d_1+1,d_2+1,d_3)$ and similarly for $N \oplus E_{23}$ and $N \oplus E_{31}$.

We claim that, if $M$ is a $\leq_{deg}$-maximal $2$-nilpotent representation of $\tilde{\bfA}_3$ with dimension vector $(q,q,q)$, then
\[
 M = (E_{\alpha\alpha} \oplus E_{\alpha +1,\alpha-1})^{\oplus k} \oplus (E_{12} \oplus E_{23} \oplus E_{31})^{\oplus \ell}
\]
for some $\alpha=1,2,3$ and some $k,\ell$ such that $q = k +2\ell$. It is clear that if $M$ has this form then it is $\leq_{deg}$-maximal. To show that all maximals have this form, it suffices to observe the following two facts:

\begin{itemize}
\item[$\cdot$] if two distinct indecomposables among $E_{11},E_{22},E_{33}$ occur in $N$ then $N$ is not maximal because a gluing operation among those two indecomposables may be performed;

\item[$\cdot$] if $E_{12} \oplus E_{23}$ occurs in a maximal representation $N$ of dimension vector $(q,q,q)$, then $E_{31}$ occurs as well; indeed the dimension vectors obtained from indecomposables of the form $E_{12}, E_{23}$ is of the form $(d,d+d',d')$; if $E_{31}$ does not occur, then both $E_{11}$ and $E_{33}$ occur, in contradiction with the previous observation. The analogous statements hold for $E_{23} \oplus E_{31}$ and $E_{31} \oplus E_{12}$.
\end{itemize}

If $M$ is maximal of dimension vector $(q,q,q)$, write $M = M' \oplus (E_{12} \oplus E_{23} \oplus E_{31})^{\oplus \ell}$, with no summands of the form $ (E_{12} \oplus E_{23} \oplus E_{31})$ occurring in $M'$. Then only one among $E_{12},E_{23}$ or $E_{31}$ appears in $M'$, so $M'= (E_{\alpha\alpha} \oplus E_{\alpha +1,\alpha-1})^{\oplus k}$ for some $\alpha$.

An easy combinatorial argument shows that the number of irreducible components is $3 (q-1)/2$ if $q$ is odd and $1 + 3q/2$ if $q$ is even.

We compute the matrices of ranks of maximal representations of dimension vector $(q,q,q)$. For simplicity, fix $\alpha = 3$; the matrix of ranks of $M = (E_{33} \oplus E_{12})^{\oplus k} \oplus (E_{12} \oplus E_{23} \oplus E_{31})^{\oplus \ell}$ is
\[
 \bfR\bfk (M) = \left( \begin{array}{ccc}
                        q & \ell & 0 \\ 0 & q & \ell \\ \ell +k & 0 & q
                       \end{array} \right).
\]

The irreducible component $\calZ^3_q(M)$ is the orbit closure in $V^* \simeq (Mat_q)^{\oplus 3}$ under the action of $G(\tilde{\bfA}_3,(q,q,q)) = (GL_q)^{\times 3}$ of the point 
\[
 \left( 
\left( \begin{array}{ccc}
         0 & I_\ell & 0 \\
         0 & 0 & 0 \\
         0 & 0 & 0 \\
        \end{array} \right) , \left(  \begin{array}{ccc}
         0 & I_\ell & 0 \\
         0 & 0 & 0 \\
         0 & 0 & 0 \\         
        \end{array}\right) , \left(  \begin{array}{ccc}
         0  & I_\ell & 0 \\
         0 & 0 & 0 \\
         0 & 0 & I_k \\         
        \end{array}\right) \right) \in V^*,
\]
where the blocking is $(\ell,\ell,k) \times (\ell,\ell,k)$.

Theorem \ref{thm: desingularization of irred compts of Sing} provides $\dim \calZ^3_q(M) = q^2 + 2\ell (q-\ell)$. In particular, if $q$ is even, consider $M = (E_{12} \oplus E_{23} \oplus E_{31})^{\oplus q/2}$ so that $\dim \calS ing^3_q = \dim \calZ^3_q(M) = \frac{3}{2}q^2$. If $q$ is odd, consider $M = (E_{33} \oplus E_{12}) \oplus (E_{12} \oplus E_{23} \oplus E_{31})^{\oplus (q-1)/2}$, so that $\dim \calS ing^3_q = \dim \calZ^3_q(M) = (3q^2 -1)/2 $.

\subsection*{Case $\mathbf{q=2}$} Suppose $M$ is a maximal $\tilde{\bfA}_n$ representation of dimension vector $(2^n)$, with $n \geq 4$. It is easy to observe that at most $4$ indecomposable summands occur in $M$. Notice that $V^* \simeq (Mat_2)^{\oplus n}$ and $G(\tilde{\bfA}_n,(2^n)) \simeq GL_2^{\times n}$.

If exactly two distinct indecomposable summands occur in $M$, then
 \[
  M \simeq (E_{\alpha+1,\beta} \oplus E_{\beta+1,\alpha})^{\oplus 2}
 \]
 for distinct vertices $\alpha,\beta$. There are $\binom{n}{2}$ such maximals, because any choice of two vertices provides one of them; the irreducible component $\calZ^n_2(M)$ is the orbit-closure of $(X_1 \vvirg X_n) \in V^*$ where 
 \begin{align*}
X_\alpha &= X_\beta = 0,\\ 
X_\eps &= I_2 \quad \text{ if } \eps \neq \alpha,\beta. 
 \end{align*}
The codimension of $\calZ^n_2(M)$ in $V^*$ is $8$.

If exactly three distinct indecomposable summands occur in $M$, then
 \[
  M \simeq E_{\alpha+1,\gamma} \oplus E_{\beta+1,\alpha} \oplus E_{\gamma+1,\beta}
 \]
for distinct vertices $[\alpha,\beta,\gamma]$. There are $\binom{n}{3}$ such maximals because every choice of three vertices provides one of them. The irreducible component $\calZ^n_2(M)$ is the orbit-closure of $(X_1 \vvirg X_n) \in V^*$ where
\begin{align*}
X_\alpha &= X_\beta = X_\gamma = \left(\begin{smallmatrix}
                                              0 & 1 \\ 0 & 0
                                             \end{smallmatrix} \right),\\
X_\eps &= I_2 \quad \text{ if } \eps \neq \alpha,\beta,\gamma. 
\end{align*}

The calculation of the matrix of rank and Theorem \ref{thm: desingularization of irred compts of Sing} provide that $\calZ^n_2(M)$ has codimension $6$ for every $n$.

If exactly four distinct indecomposable summands occur in $M$, then
\[
 M \simeq E_{\alpha+1,\beta} \oplus E_{\beta+1,\alpha} \oplus E_{\gamma+1,\delta} \oplus E_{\delta+1,\gamma}
\]
for distinct vertices $[\alpha,\beta,\gamma,\delta]$. There are $2\binom{n}{4}$ such maximals because every choice of four vertices provides two of them. The irreducible component $\calZ^n_2(M)$ is the orbit closure of $(X_1 \vvirg X_n) \in V^*$ where
\begin{align*}
X_\alpha &= X_\beta =  \left(\begin{smallmatrix}                    1 & 0 \\ 0 & 0
                                             \end{smallmatrix}\right),\\
 X_{\gamma} &= X_\delta = \left(\begin{smallmatrix}
                                              0 & 0 \\ 0 & 1
                                             \end{smallmatrix}\right),\\
    X_\eps &= I_2 \quad \text{if } \eps \neq \alpha,\beta,\gamma,\delta. 
\end{align*}
The calculation of the matrix of ranks and Theorem \ref{thm: desingularization of irred compts of Sing} provide that $\calZ^n_2(M)$ has codimension $6$ for every $n$.
                                            
\section{The $(n-2)$-nd Jacobian locus of $\calI mm^n_q$}

In this section we study the $(n-2)$-nd Jacobian locus of $\calI mm_q^n$, that we denote by $\calW$, namely the zero locus of the partials of order $n-2$ of $IMM_q^n$. The goals are similar to the ones of the previous section: we want to determine the decomposition of $\calW$ into irreducible components and the dimensions of the components. Although it might be possible to study this variety in terms of quiver representations, we find a direct approach easier and more natural. 

Again, $\calI mm^n_q$ here denotes the affine cone in $V^*$ over the projective variety $V(IMM_q^n) \subseteq \bbP V^*$. By definition $\calW = V(J)$, where $J$ is the ideal generated by all partials of order $n-2$ of $IMM_q^n$:
\[
 J = \langle \Uptheta \cdot IMM_q^n: \Uptheta \in S^{n-2} V^*\rangle.
\]

Let $\Uptheta $ be a monomial of degree $(n-2)$ in the linear differential operators $(\xi_\alpha)^i_j = \frac{\partial}{\partial (x_\alpha)^j_i}$, that is a basis of $V^*$ dual to the basis of $V$ given by the $(x_\alpha)^i_j$'s. It is immediate that $\Uptheta \cdot IMM^n_q = 0$, unless $\Uptheta$ has one of the following two forms:
\begin{itemize}
\item[$\cdot$] $\Uptheta$ involves exactly one variable from each $A_\gamma$, except for two consecutive ones, $A_\alpha,A_{\alpha+1}$, and it has the form
\[
 \Uptheta =  (\xi_{n})_{i_1}^{i_n}\cdot (\xi_{n-1})^{i_{n-1}}_{i_n} \cdots (\xi_{\alpha+2})_{i_{\alpha+3}}^{t} \cdot (\xi_{\alpha-1})_s^{i_{\alpha-1}} \cdots (\xi_2)^{i_2}_{i_3}\cdot (\xi_1)^{i_1}_{i_2}
\]
for some $\alpha$ and some $i_\gamma,s,t$: we obtain
\[
\Uptheta \cdot IMM_q^n = \textsum_{i_{\alpha+1}} (x_{\alpha+1})^t_{i_{\alpha+1}} (x_{\alpha})^{i_{\alpha+1}}_s = (X_{\alpha +1} X_\alpha )_s^t;
\]

\item[$\cdot$] $\Uptheta$ involves exactly one variable from each $A_\gamma$, except for two non-consecutive ones, $A_\alpha,A_\beta$, and it has the form
 \[
 \Uptheta =  (\xi_n)_{i_1}^{i_n}\cdot (\xi_{n-1})^{i_{n-1}}_{i_n} \cdots  (\xi_{\beta+1})_{i_{\beta +2}}^{t} \cdot (\xi_{\beta-1})_{s}^{i_{\beta -1}} \cdots (\xi_{\alpha+1})_{i_{\alpha +2}}^{r} \cdot (\xi_{\alpha-1})_{p}^{i_{\alpha -1}} \cdots (\xi_2)^{i_2}_{i_3}\cdot(\xi_1)_{i_2}^{i_1}
 \]
for some $\alpha,\beta$ with $\alpha \neq \beta, \alpha \neq \beta \pm 1$ and some $i_\gamma,p,r,s,t$: we obtain
\[
\Uptheta \cdot IMM_q^n =  (x_\beta)^t_s (x_\alpha)^r_p.
\]
\end{itemize}

Hence, the $(n-2)$-nd Jacobian locus of $\calI mm_q^n$ is the variety in $V^*$ that is cut out by the following two sets of equations:
 \begin{equation*}
\left\{ \begin{array}{l}
(x_\beta)^t_s (x_\alpha)^r_p \\  \alpha ,\beta = 1\vvirg n; \alpha \neq \beta \pm 1\\
  p,r,s,t = 1\vvirg q
 \end{array}\right.
\qquad
\left\{\begin{array}{ll}
 (X_{\alpha + 1}X_\alpha )_s^t = 0\\
 \alpha = 1\vvirg n\\
 s,t = 1 \vvirg q.
\end{array}\right.
\end{equation*}

The first set of equations implies that if $(X_1 \vvirg X_n) \in \calW$, then only two consecutive $X_\alpha$'s may be non-zero; the second set of equations implies that, whenever two consecutive $X_{\alpha}$'s are non-zero, then their product must be $0$. Denote by $L_\alpha$ the linear space given by $L_\alpha := \{ (X_1 \vvirg X_n) : X_\beta= 0 \text{ if } \beta \neq \alpha,\alpha-1\} \subseteq V^*$ and $\calW_\alpha := \calW \cap L_\alpha$. We have
\[
 \calW = \bigcup_{\alpha=1 \vvirg n} \calW_\alpha.
\]
  
Define $\calW_{\alpha,r} := \{ (X_1 \vvirg X_n) \in \calW_\alpha: \rk X_{\alpha-1} \leq r, \rk X_\alpha \leq q-r\}$.  

Equivalently
\[
 \calW_{\alpha,r} = \{ (0\vvirg 0,X_{\alpha -1},X_\alpha,0\vvirg 0): \rk X_{\alpha -1} \leq r,\rk X_{\alpha} \leq q-r, X_{\alpha}X_{\alpha-1} = 0\}.
\]
It is clear that $\calW_{\alpha} = \bigcup_{r=0}^{q} \calW_{\alpha,r}$ and $\calW_{\alpha-1,0} = \calW_{\alpha,q}$. 

The following result completely describes the irreducible components of $\calW$ and their dimensions
\begin{thm}\label{thm: n-2 singular locus}
For every $\alpha =1 \vvirg n$, $r =0 \vvirg q$, $\calW_{\alpha,r}$ is an irreducible variety of dimension
\[
 \dim \calW_{\alpha,r} = q^2 + rq - r^2.
\]
In particular $\calW$ has $nq$ irreducible components. 

The dimension of $\calW$ is given by the maximal dimension of the $\calW_{\alpha,r}$'s; it is attained at $r = \lfloor \frac{q}{2} \rfloor$ or equivalently $r = \lceil \frac{q}{2} \rceil$:
\[
\dim \calW =  \dim \calW_{\alpha,r} = \left\{ \begin{array}{ll}
                                  \frac{5}{4}q^2 & \text{if $q$ is even},\\ \\
                                  \frac{5}{4}q^2 - \frac{1}{4} & \text{if $q$ is odd}.
                                 \end{array}
\right.
\]
\end{thm}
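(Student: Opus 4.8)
The plan is to build on the description of $\calW$ established above (valid for $n\geq4$): $\calW=\bigcup_{\alpha=1}^{n}\bigcup_{r=0}^{q}\calW_{\alpha,r}$, each $\calW_{\alpha,r}$ is closed in $V^{*}$ (cut out by rank conditions on $X_{\alpha-1},X_{\alpha}$ together with $X_{\alpha}X_{\alpha-1}=0$), and $\calW_{\alpha-1,0}=\calW_{\alpha,q}$. Under the linear identification $L_{\alpha}\simeq Mat_{q}\oplus Mat_{q}$, $(X_{1}\vvirg X_{n})\mapsto(X_{\alpha-1},X_{\alpha})$, every $\calW_{\alpha,r}$ becomes the single model variety
\[
 Z_{r}:=\{(A,B)\in Mat_{q}\oplus Mat_{q}:\ \rk A\le r,\ \rk B\le q-r,\ BA=0\},
\]
so it suffices to analyse $Z_{r}$. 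The three assertions of the theorem then amount to: $Z_{r}$ is irreducible of dimension $q^{2}+rq-r^{2}$; the blocks $\calW_{\alpha,r}$ — of which there are $nq$ distinct ones once the identification $\calW_{\alpha,q}=\calW_{\alpha-1,0}$ is accounted for — cover $\calW$ and are pairwise incomparable, hence are exactly its irreducible components; and $\dim\calW=\max_{0\le r\le q}(q^{2}+rq-r^{2})$.

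The key step, in the spirit of Theorem \ref{thm: desingularization of irred compts of Sing}, is to resolve $Z_{r}$ by a vector bundle over a Grassmannian. I would put $\calG:=Gr(r,\bbC^{q})$ and
\[
 \tilde{Z}_{r}:=\{(A,B,W)\in Mat_{q}\oplus Mat_{q}\oplus\calG:\ \Im A\subseteq W\subseteq\ker B\}.
\]
Projection to $\calG$ exhibits $\tilde{Z}_{r}$ as the total space of the vector bundle $\Hom(\bbC^{q},\scrS)\oplus\Hom(\scrQ,\bbC^{q})$, where $\scrS$ and $\scrQ$ are the tautological sub- and quotient bundles of $\calG$; its fibre over $W$ is $\Hom(\bbC^{q},W)\oplus\Hom(\bbC^{q}/W,\bbC^{q})$, of dimension $qr+q(q-r)=q^{2}$, so $\tilde{Z}_{r}$ is smooth and irreducible of dimension $\dim\calG+q^{2}=r(q-r)+q^{2}$. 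The forgetful map $\pi\colon\tilde{Z}_{r}\to Mat_{q}\oplus Mat_{q}$ has image exactly $Z_{r}$: the inclusions force $\rk A\le r$, $\rk B\le q-r$ and $BA=0$, while for $(A,B)\in Z_{r}$ one has $\Im A\subseteq\ker B$ with $\dim\Im A\le r\le\dim\ker B$, so some intermediate $r$-plane $W$ exists. As $\calG$ is projective, $\pi$ is proper, hence $Z_{r}$ is closed and irreducible of dimension $\le q^{2}+rq-r^{2}$; and over the dense open locus where $\rk A=r$, $\rk B=q-r$ one has $\Im A=\ker B=W$, so the fibre of $\pi$ there is a single point. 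Thus $\pi$ is birational, $\dim Z_{r}=q^{2}+rq-r^{2}$, and $\pi$ is a desingularization. This step is the one I expect to demand the most care — pinning down the correct incidence variety, checking it is genuinely a vector bundle, and verifying birationality of $\pi$.

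Finally I would count the components and maximize the dimension. Since $\calW$ is the union of the $nq$ closed irreducible subvarieties $\calW_{\alpha,r}$ ($\alpha\in\bbZ_{n}$, $0\le r\le q-1$), its irreducible components are the maximal members of this finite family, so it suffices to show no block lies in another. For this I would take a general point $P$ of $\calW_{\alpha,r}$: by the description of $\pi$ it has $X_{\beta}=0$ for $\beta\neq\alpha-1,\alpha$, while $\rk X_{\alpha-1}=r$ and $\rk X_{\alpha}=q-r$. If $\calW_{\alpha,r}\subseteq\calW_{\alpha',r'}$ then $P\in\calW_{\alpha',r'}$, which forces $X_{\beta}=0$ for $\beta\neq\alpha'-1,\alpha'$, $\rk X_{\alpha'-1}\le r'$ and $\rk X_{\alpha'}\le q-r'$; reading off from $P$ the set of slots where it is non-zero together with the exact ranks there recovers $(\alpha,r)$ from $(\alpha',r')$ (the case $r=0$, where $P$ is supported on a single slot with $\rk X_{\alpha}=q$, is treated separately). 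Hence $(\alpha,r)=(\alpha',r')$, so all $nq$ blocks are components and there are no others. Maximizing $f(r)=q^{2}+rq-r^{2}$ over integers $0\le r\le q$: $f$ is concave with real maximum at $r=q/2$, so the maximum is attained at $r=\lfloor q/2\rfloor$ or $r=\lceil q/2\rceil$, giving $\tfrac{5}{4}q^{2}$ for $q$ even and $\tfrac{5q^{2}-1}{4}$ for $q$ odd, which is the stated value of $\dim\calW$.
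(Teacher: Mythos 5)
Your proposal is correct and follows essentially the same route as the paper: the same reduction of $\calW_{\alpha,r}$ to the model variety $Z_r$, the same incidence variety $\tilde{Z}_r=\{(A,B,W):\Im A\subseteq W\subseteq\ker B\}$ realized as a vector bundle of fiber dimension $q^2$ over $Gr(r,\bbC^q)$, birationality over the locus of exact ranks, and the same final maximization. Your verification that the $nq$ blocks are pairwise incomparable (via the support slots and exact ranks of a general point) is slightly more explicit than the paper's, but it is the same argument in substance.
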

\begin{proof}
For every $r$, $\calW_{\alpha,r}$ is isomorphic to the variety
\[
 Z := \{(X,Y) \in Mat_q \times Mat_q : \rk X \leq r, \rk Y \leq q-r, YX = 0\}.
\]
We will define a desingularization of $Z$ as a vector bundle over $Gr(r,\bbC^q)$, the Grassmannian of $r$-planes in $\bbC^q$ and we will compute the dimension of such desingularization.

Consider
\[
 \tilde{Z} := \{(X,Y,E) \in Mat_q \times Mat_q \times Gr(r,\bbC^q): \Im X \subseteq E \subseteq \ker Y \}.
\]
The projection $(X,Y,E) \mapsto E$ defines a vector bundle over $Gr(r,\bbC^q)$, therefore $\tilde{Z}$ is a smooth irreducible variety. The fiber of this bundle has dimension $q^2$.

On the other hand, the projection $(X,Y,E) \mapsto (X,Y)$ maps $\tilde{Z}$ surjectively onto $Z$ and the projection is one-to-one on the open set $\{(X,Y) : \rk X = r, \rk Y = q-r\}$; therefore $\tilde{Z}$ is a desingularization of $Z$. This proves that $Z$ is irreducible and so are the $\calW_{\alpha,r}$'s.

The dimension of $Z$ is
\[
 \dim Z = \dim \tilde{Z} = \dim \tilde{Z}_{E} + \dim Gr(r,\bbC^q) = q^2 + r(q - r),
\]
where $Z_E$ is the fiber of the bundle $\tilde{Z} \to Gr(r,\bbC^q)$ at a point $E \in Gr(r,\bbC^q)$.

It is clear that all the $\calW_{\alpha,r}$ are distinct, with the only exception of $\calW_{\alpha-1,0} = \calW_{\alpha,q}$, therefore $\calW$ has $nq$ irreducible components. The straightforward calculation of the maximal value of $q^2 + rq - r^2$ for an integer $r$ will conclude the proof.
\end{proof}
 
 \subsection*{Acknowledgements} I thank J.M. Landsberg for his constant guidance while I was writing this paper. I also thank C. Ikenmeyer, R. Kinser, S. Naldi, M. Phillipson, O. Schiffmann and J. Weyman for fruitful discussions, suggestions and comments. A significant part of this work was done while I was visiting the Simons Institute for the Theory of Computing at Berkeley during a semester long program on \emph{Algorithms and Complexity in Algebraic Geometry} in Fall 2014.

\bibliographystyle{amsalpha}
\bibliography{mybib}

\end{document}